\newtheorem{theorem}{Theorem}[section]
\newtheorem{proposition}[theorem]{Proposition}
\newtheorem{corollary}[theorem]{Corollary}
\newtheorem{conjecture}[theorem]{Conjecture}
\newtheorem*{orlconj}{Conjecture \ref{orlov-conj}}
\numberwithin{equation}{section}
\theoremstyle{definition}
\newtheorem{definition}[theorem]{Definition}
\newtheorem{remark}[theorem]{Remark}
\newtheorem{question}[theorem]{Question}
\newcommand{\Db}{{\rm D}^{\rm b}}
\newcommand{\Br}{{\rm Br}}
\newcommand{\CH}{{\rm CH}}
\newcommand{\Pic}{{\rm Pic}}
\newcommand{\coh}{{\cat{Coh}}}
\newcommand{\corr}{{\rm Corr}}
\newcommand{\Hom}{{\rm Hom}}
\newcommand{\Spec}{{\rm Spec}}
\newcommand{\Td}{{\rm Td}}
\renewcommand{\dim}{{\rm dim}\,}
\newcommand{\id}{{\rm id}}
\newcommand{\cat}[1]{\begin{bf}#1\end{bf}}
\newcommand{\cal}{\mathcal}
\newcommand{\ka}{{\cal A}}
\newcommand{\kb}{{\cal B}}
\newcommand{\ke}{{\cal E}}
\newcommand{\kf}{{\cal F}}
\newcommand{\km}{{\cal M}}
\newcommand{\ko}{{\cal O}}
\newcommand{\ZZ}{\mathbb{Z}}
\newcommand{\QQ}{\mathbb{Q}}
\newcommand{\CC}{\mathbb{C}}
\newcommand{\FF}{\mathbb{F}}
\newcommand{\LL}{\mathbb{L}}
\newcommand{\PP}{\mathbb{P}}
\newcommand{\Alb}{{\rm Alb}}
\begin{document}

\title{Categorical representability and intermediate Jacobians of Fano threefolds}

\author[M.\ Bernardara, M.\ Bolognesi]{Marcello Bernardara and Michele Bolognesi}

\address{M.Be.: Univerist\"at Duisburg--Essen, Fakult\"at f\"ur Mathematik. Universit\"atstr. 2, 45117 Essen (Germany)}
\email{marcello.bernardara@uni-due.de}

\address{M.Bo.: IRMAR, Universit\'e de Rennes 1.
263 Av. G\'en\'eral Leclerc, 35042 Rennes CEDEX (France)} 
\email{michele.bolognesi@univ-rennes1.fr}

\begin{abstract}
We define, basing upon semiorthogonal decompositions of $\Db(X)$, categorical representability
of a projective variety $X$ and describe its relation with classical representabilities of the Chow ring.
For complex threefolds satisfying both classical and categorical representability assumptions, we
reconstruct the intermediate Jacobian from the semiorthogonal decomposition. We discuss finally how
categorical representability can give useful information on the birational properties of $X$ by
providing examples and stating open questions.
\end{abstract}

\maketitle

\section{Introduction}

These notes arise from the attempt to extend the results of \cite{marcello-bolo-conicbd} to a wider class of complex threefolds with
negative Kodaira dimension. If $Y \to S$ is a conic bundle and $S$ is rational, a semiorthogonal decomposition of $\Db(Y)$
by derived categories of curves and exceptional objects gives a splitting of the intermediate Jacobian as the direct
sum of the Jacobians of the curves \cite[Thm 1.1]{marcello-bolo-conicbd}. This result is based on the relation between fully
faithful functors $\Db(\Gamma) \to \Db(Y)$ (where $\Gamma$ is a smooth projective curve) and algebraic cycles on $Y$.
First of all, using the Chow--K\"unneth decompositions of the motives of $\Gamma$ and $Y$, we get from such functor an isogeny
between $J(\Gamma)$ and an abelian subvariety of $J(Y)$. Secondly, using the universal isomorphism between $A^2_{\ZZ}(Y)$ and a Prym variety
and the incidence property, we prove that such an isogeny is indeed an injective morphism of principally polarized
abelian varieties. Finally, the existence of the mentioned semiorthogonal decomposition assures the splitting of the intermediate Jacobian.
It turns out that the properties needed to prove this result are enjoyed also by threefolds other than conic bundles. One of the aims of this paper is to describe certain varieties satisfying those representability assumptions.
 
In a generalization attempt, we define a new notion of representability based on semiorthogonal decompositions, which we expect to
carry useful geometrical insights also in higher dimensions.
Let $X$ be a smooth projective variety of dimension $n$. We define \it categorical representability in
(co)dimension $m$ \rm for $X$, roughly by requiring that the derived category $\Db(X)$ admits a semiorthogonal decomposition
by categories appearing in smooth projective varieties of dimension $m$ (resp. $n-m$). 

The easiest case is of course representability in dimension 0. This is equivalent to say that $X$ admits
a full exceptional sequence of a finite number, say $l$, of objects. In this case we have $K_0(X) = \ZZ^l$. This
is indeed a very strong notion and gives rise to intriguing questions to explore even for surfaces.

Various notions of representability of the group $A^i_{\ZZ}(X)$ of algebraically trivial cycles of codimension
$i$ on $X$ have appeared throughout the years in the literature, and it seems interesting to understand
their interactions with categorical representability, as our examples suggest.
Roughly speaking, \it weak representability \rm for $A^i_{\ZZ}(X)$ is given by an algebraic map
$J(\Gamma) \to A^i_{\ZZ}(X)$ whose kernel is an algebraic group, for an algebraic curve $\Gamma$. Working
with rational coefficients (that is, with $A^i_{\QQ}$) gives the notion of \it rational representability.
Algebraic representability \rm
requires the existence of a universal regular isomorphism $A^i_{\ZZ}(X) \to A$ onto an abelian variety $A$.
Finally, if $\dim(X)=2n+1$ is odd, $A$ is the algebraic representative of $A^n_{\ZZ}(X)$, and the principal polarization
of $A$ is ``well behaved`` with respect to this regular isomorphism
we say that $A$ carries an \it incidence polarization. \rm

The definition of categorical representability could seem rather disjoint from the classical ones.
It is nevertheless clear that rational representability is strongly related to the structure of the motive of $X$.
For example, if $X$ is a threefold, then rational representability of all the $A^i_{\QQ}(X)$ is equivalent to the existence of
a specific Chow--K\"unneth decomposition \cite{gorch-gul-motives-and-repr}.
A first point to note is then that fully faithful functors should hold motivic maps, as stated in the following conjecture by Orlov.

\begin{conjecture}[\cite{orlov-motiv}]\label{orlov-conj}
Let $X$ and $Y$ be smooth projective varieties and $\Phi: \Db(Y) \to \Db(X)$ be a fully faithful functor. Then
the motive $h(Y)$ is a direct summand of the motive $h(X)$.
\end{conjecture}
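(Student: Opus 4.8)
My plan is to pass from the functor to a correspondence on rational Chow groups and to argue that full faithfulness forces this correspondence to split the motive. First, by Orlov's representability theorem the fully faithful functor $\Phi$ is isomorphic to a Fourier--Mukai transform $\FM{\ke}$ with kernel $\ke \in \Db(Y \times X)$. Since $Y$ and $X$ are smooth and projective, $\Phi$ admits a right adjoint $\aFM{\ke} := \Phi^{!}$, itself a Fourier--Mukai transform whose kernel $\ke^{!}$ is produced from $\ke$ by Mukai's adjunction formula (dualize and twist by $\omega_X[\dim X]$). The key categorical input is that full faithfulness of $\Phi$ is equivalent to the unit $\id_{\Db(Y)} \to \Phi^{!}\Phi$ being an isomorphism; read on kernels, this is a convolution isomorphism $\ke^{!} * \ke \iso \ko_{\Delta_Y}$ in $\Db(Y \times Y)$, where $\ko_{\Delta_Y}$ is the structure sheaf of the diagonal.

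Next I would attach to each kernel its Todd-corrected Chern character, the Mukai-vector correspondence $\gamma(\ke) := \ch(\ke)\cdot\sqrt{\td(Y\times X)} \in \bigoplus_i \CH^i(Y\times X)_{\QQ}$. The point is the compatibility furnished by Grothendieck--Riemann--Roch: rationally, the assignment $\ke \mapsto \gamma(\ke)$ turns convolution of kernels into composition of correspondences and sends $\ko_{\Delta_Y}$ to the class of the diagonal, that is, to the identity correspondence. Applying this to the isomorphism of the previous step yields $\gamma(\ke^{!}) \circ \gamma(\ke) = \Delta_Y$ in the ring $\corr(Y,Y)_{\QQ}$. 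Hence $\gamma(\ke)$ and $\gamma(\ke^{!})$ exhibit $\bigoplus_p \CH^p(Y)_{\QQ}$ as a direct summand of $\bigoplus_q \CH^q(X)_{\QQ}$, and $p := \gamma(\ke)\circ\gamma(\ke^{!}) \in \corr(X,X)_{\QQ}$ is an idempotent cutting out a copy of this total Chow group.

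The hard part will be promoting this \emph{ungraded} splitting to a splitting of motives in the graded sense demanded by the statement. A morphism $h(Y) \to h(X)$ of Chow motives is a correspondence homogeneous of degree $0$, whereas $\gamma(\ke)$ is genuinely inhomogeneous: a Fourier--Mukai kernel mixes codimensions, so the idempotent $p$ need not be homogeneous and a priori does not define a projector in $\CH^{\dim X}(X\times X)_{\QQ}$. To extract a genuine submotive one must align $p$ with the grading, for instance by conjugating it with Chow--K\"unneth projectors of $X$ and checking that the resulting degree-$0$ component is still a split injection. This presupposes the existence, and good behaviour, of a Chow--K\"unneth decomposition of $X$ --- precisely the rational-representability input discussed above, cf.\ \cite{gorch-gul-motives-and-repr}.

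Controlling this homogenization in general is where I expect the difficulty to concentrate, and it is the reason the statement is only conjectural: the Fourier--Mukai formalism hands over an idempotent on the total rational Chow motive essentially for free, but distilling from it a degree-preserving direct summand $h(Y) \subset h(X)$ demands additional structure on the motive of $X$ that is not available for an arbitrary smooth projective variety. In the geometric situations of interest in this paper, however, exactly such structure is present, which is what makes the conjecture accessible there and motivates reconstructing the intermediate Jacobian directly from the semiorthogonal decomposition.
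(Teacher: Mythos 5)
This statement is a conjecture, and the paper offers no proof of it; what it gives in Section \ref{sec-fff-and-mot} is exactly the sketch you reproduce: replace $\Phi$ and its adjoint by Fourier--Mukai kernels $\ke$, $\kf$, pass to the Todd-corrected Chern characters $e$, $f$, and deduce from Grothendieck--Riemann--Roch that the degree-zero part $\bigoplus_i f_{n+m-i}e_i$ of the composed correspondence is $\id_{h(Y)}$. Your diagnosis of why this does not finish the argument is also the correct one and matches the paper's stance: the components $e_i$ land in the various Tate twists $h(X)(i-n)$, so one only gets $h(Y)$ as a summand of a sum of twists of $h(X)$, and homogenizing the idempotent requires extra structure (a suitable Chow--K\"unneth decomposition); accordingly the paper only proves the statement in special cases, namely Orlov's result for kernels supported in the expected dimension and Theorem \ref{from-Db-to-isogeny} for $Y$ a curve and $X$ a threefold satisfying $(\star)$. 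In short, your proposal is not a proof, but it is an accurate account of the state of the art and of where the difficulty lies, in agreement with the paper.
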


In order to get a link between categorical and rational representability, we should consider the former in
dimension 1. Note that being categorically representable in dimension 1 is equivalent to the existence
of a semiorthogonal decomposition by exceptional objects and derived categories of curves.
Orlov conjecture would then imply that if $X$ is categorically representable in dimension 1, then
its motive is a finite sum of abelian and discrete motives, and this would give informations about rational representability
for $A^i_{\QQ}(X)$. Being categorically representable in dimension 1
seems to be in fact a very strong condition. For example a smooth cubic threefold is strongly representable
with incidence property but not categorically representable, otherwise we would have the splitting
of the intermediate Jacobian (see Corollary \ref{cor-no-split-no-fm}). 
Notice that in \cite{kuzne-manivel-marku} the study of the Abel--Jacobi map for some hypersurfaces and
its link with categorical constructions were already treated. 

Algebraic representability and the incidence property can have deep interactions with
categorical representability, and this is indeed the heart of the proof of Theorem 1.1 in \cite{marcello-bolo-conicbd}.
Consider a smooth projective threefold $X$ and assume it to be
rationally representable, with $h^1(X)=h^5(X)=0$, and with $A^2_{\ZZ}(X)$ algebraically representable with the incidence property. The arguments
in \cite{marcello-bolo-conicbd} show that if $X$ is categorically representable in dimension 1, then
the intermediate Jacobian $J(X)$ splits into Jacobians of curves, namely of those curves of positive genus appearing in the
semiorthogonal decomposition. This result can then be applied to a large class of complex threefolds with negative
Kodaira dimension (see a list in Remark \ref{remark-list-of-3folds}).

We can then reasonably raise the following question, which also points out how this new definition could be useful
in higher dimensions: is categorical representability in codimension 2 a necessary condition
for rationality? This is true for curves (where we have to replace codimension 2 with dimension 0) and
for surfaces, since any rational smooth projective surface admits a full exceptional sequence.
Remark \ref{list-of-iff} shows that this is true for a wide class of complex threefods with negative Kodaira dimension, but we can only
argue so far by a case by case analysis. Categorical representability should moreover hold the
vanishing of the Clemens--Griffiths component of $\Db(X)$ mentioned in \cite{kuznetcubicfourfold}.
We can wonder if Kuznetsov's conjecture about rationality of cubic fourfold (\cite[Conj. 1.1]{kuznetcubicfourfold}) could then
be restated as follows: a cubic fourfold is rational if and only if it is categorically representable in
codimension 2. Finally, we can argue some conjectural relation between
categorical representability and the existence of gaps in the Orlov spectrum defined
in \cite{katza-favero-ballard-generation-time}.

\subsection*{Notations}

Any triangulated category is assumed to be essentially small. 
Given a smooth projective variety $X$, we denote $\kappa_X$ its Kodaira dimension, $\Db(X)$ the bounded derived 
category of coherent sheaves on it, $K_0(X)$ its Grothendieck group, $CH_{\ZZ}^d(X)$ the Chow group of
codimension $d$ cycles, and $A_{\ZZ}^d(X)$ the subgroup of algebraically trivial cycles in $CH_{\ZZ}^d(X)$.
If is $X$ pure $d$-dimensional, and $Y$ any smooth projective variety,
we denote by $\corr^i(X,Y):= CH_{\QQ}^{i+d}(X\times Y)$ the group of correspondences with rational
coefficients. If $X=\coprod X_j$, with $X_j$ connected, then
$\corr^i(X,Y) = \oplus \corr^i(X_j,Y)$.

\section{Categorical and classical representabilities for smooth projective varieties} 

\subsection{Semiorthogonal decompositions and categorical representability}
We start by recalling some categorical definitions which are necessary to define representability.
Let $K$ be a field and $\cat{T}$ a $K$-linear triangulated category. A full triangulated category $\cat{A}$ of
$\cat{T}$ is called \it admissible \rm if the embedding functor admits a left and a right adjoint.

\begin{definition}[\cite{bondalkap,bondalorlov}]\label{def-semiortho}
A \it semiorthogonal decomposition \rm of $\cat{T}$ is a sequence of admissible subcategories
$\cat{A}_1, \ldots, \cat{A}_l$ of $\cat{T}$ such that $\Hom_{\cat{T}}(A_i,A_j) = 0$ for all $i>j$ and for 
all objects $A_i$ in $\cat{A}_i$ and $A_j$ in $\cat{A}_j$, and for every object $T$ of $\cat{T}$, there is a chain
of morphisms $0=T_n \to T_{n-1} \to \ldots \to T_1 \to T_0 = T$ such that the cone of $T_k \to T_{k-1}$ is
an object of $\cat{A}_k$ for all $k=1,\ldots,l$. Such a decomposition will be written
$$\cat{T} = \langle \cat{A}_1, \ldots, \cat{A}_l \rangle.$$
\end{definition}

\begin{definition}[\cite{bondal}]\label{def-except}
An object $E$ of $\cat{T}$ is called \it exceptional \rm if $\Hom_{\cat{T}} (E,E) = K$, and $\Hom_{\cat{T}}(E,E[i])=0$
for all $i \neq 0$. A collection $\{E_1,\ldots,E_l\}$ of exceptional objects is called \it exceptional \rm if
$\Hom_{\cat{T}}(E_j,E_k[i])=0$ for all $j>k$ and for all integer $i$.
\end{definition}

If $E$ in $\cat{T}$ is an exceptional object, the triangulated category generated by $E$ (that is, the smallest full
triangulated subcategory of $\cat{T}$ containing $E$) is equivalent to the derived category of a point, seen as a smooth projective
variety. The equivalence $\Db(pt) \to \langle E \rangle \subset \cat{T}$ is indeed given by sending $\ko_{pt}$ to $E$.
Given an exceptional collection $\{E_1,\ldots,E_l\}$ in the derived category $\Db(X)$ of a smooth projective variety,
there is a semiorthogonal decomposition \cite{bondalorlov}
$$\Db(X) = \langle \cat{A}, E_1, \ldots, E_l\rangle,$$
where $\cat{A}$ is the full triangulated subcategory whose objects are all the $A$ satisfying $\Hom(E_i,A)=0$
for all $i=1,\ldots,l,$ and we denote by $E_i$ the category generated by $E_i$.
We say that the exceptional sequence is \it full \rm if the category $\cat{A}$ is trivial.

\begin{definition}\label{def-rep-for-cat}
A triangulated category $\cat{T}$ is \it representable in dimension $m$ \rm
if it admits a semiorthogonal decomposition
$$\cat{T} = \langle \cat{A}_1, \ldots, \cat{A}_l \rangle,$$
and for all $i=1,\ldots,l$ there exists a smooth projective variety $Y_i$ with $\dim Y_i \leq m$, such that $\cat{A}_i$
is equivalent to an admissible subcategory of $\Db(Y_i)$.
\end{definition}

\begin{remark}\label{connectd}
Notice that we can assume that the categories $\cat{A}_i$ to be indecomposable, and
then the varieties $Y_i$ in the definition to be connected. Indeed, the derived category
$\Db(Y)$ of a scheme $Y$ is indecomposable if and only if $Y$ is connected (see \cite[Ex. 3.2]{bridg-equiv-and-FM}).
\end{remark}

\begin{definition}
Let $X$ be a smooth projective variety of dimension $n$. We say that $X$ is \it categorically representable \rm
in dimension $m$ (or equivalently in codimension $n-m$) if $\Db(X)$ is representable in dimension $m$.
\end{definition}

\begin{remark}\label{rem-def-for-non-smooth}
Suppose that $X$ is not smooth. Then to define categorical representability for it, we need to use
categorical resolution of singularities, as defined by Kuznetsov \cite{kuznet-singul}. He constructs,
provided that $X$ has rational singularities, and given a resolution $\widetilde{X} \to X$,
an admissible subcategory $\widetilde{\cat{D}}$ of $\Db(\widetilde{X})$ which is the \it categorical
resolution of singularities \rm of $\Db(X)$. Then we can say that $X$ is categorically representable
in dimension $m$ if $\widetilde{\cat{D}}$ is.
\end{remark}

Notice that if any fully faithful functor between smooth projective varieties is of Fourier--Mukai type \cite{orlovequivk3,orlovequivall}.
It is moreover worth noting and recalling the following facts, which are well-known in the derived categorical setting.

\begin{remark}[\cite{beilinson}]
The derived category of $\PP^n$ admits a full exceptional sequence.
\end{remark}

\begin{remark}
If $\Gamma$ is a smooth connected projective curve of positive genus, then $\Db(\Gamma)$ has no proper
admissible subcategory. Indeed any fully faithful functor
$\cat{A} \to \Db(\Gamma)$ is an equivalence, unless $\cat{A}$ is trivial. Then being categorically representable in dimension
1 is equivalent to admit a semiorthogonal decomposition by exceptional objects and derived
categories of smooth projective curves.
\end{remark}
\begin{remark}\label{rem-grr-decomp}
If $X$ and $Y_i$ are smooth projective and
$$\Db(X) = \langle \Db(Y_1), \ldots, \Db(Y_k) \rangle,$$
then
$$K_0(X) = \bigoplus_{i=1}^k K_0(Y_i)$$
and the Grothendieck--Riemann--Roch Theorem gives
$$\CH^*_{\QQ}(X) = \bigoplus_{i=1}^k \CH^*_{\QQ}(Y_i).$$
\end{remark}
\begin{proposition}[\cite{orlovprojbund}]\label{blow-up-formula}
Let $X$ be smooth projective and $Z \subset X$ a smooth subvariety of codimension $d > 1$. Denote by
$\varepsilon:\widetilde{X} \to X$ the blow up of $X$ along $Z$. Then
$$\Db(\widetilde{X}) = \langle \varepsilon^* \Db(X), \Db(Z)_1, \ldots, \Db(Z)_{d-1} \rangle,$$
where $\Db(Z)_i$ is equivalent to $\Db(Z)$ for all $i=1,\ldots,d-1$.
\end{proposition}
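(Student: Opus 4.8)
The plan is to realise the three types of subcategories by explicit functors attached to the blow-up diagram, to deduce full faithfulness and semiorthogonality from a single computation on the exceptional divisor, and finally to obtain generation by showing that the right orthogonal complement vanishes. Write $E \subset \widetilde{X}$ for the exceptional divisor, $j\colon E \mono \widetilde{X}$ for its inclusion, and recall that $E = \PP(N_{Z/X})$ is a $\PP^{d-1}$-bundle $p\colon E \to Z$ with $\varepsilon \comp j = i \comp p$, where $i\colon Z \mono X$. Let $\ko_E(1)$ denote the relatively ample tautological class, so that $\ko_E(E) \iso \ko_E(-1)$. Besides $\varepsilon^*$ I would introduce the functors
$$\Phi_k\colon \Db(Z) \lto \Db(\widetilde{X}), \qquad \Phi_k(-) = \R j_*\bigl(p^*(-) \otimes \ko_E(k)\bigr), \quad k = 0, \ldots, d-2,$$
and show that $\varepsilon^*\Db(X)$ together with the $d-1$ images $\Phi_k\Db(Z)$ are the pieces of the decomposition (after relabelling $k = 1, \ldots, d-1$).

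First I would check full faithfulness. For $\varepsilon^*$ this follows from $\R\varepsilon_*\ko_{\widetilde{X}} \iso \ko_X$, valid for the blow-up of a smooth centre in a smooth variety: the projection formula gives $\R\varepsilon_*\eL\varepsilon^* \iso \id$, whence $\varepsilon^*$ is fully faithful. For each $\Phi_k$ I would compute $\Hom(\Phi_k F, \Phi_k G)$ by adjunction $(\R j_*, j^!)$ together with the divisor triangle $A \otimes \ko_E(-E)[1] \to \eL j^* \R j_* A \to A$. Using $\ko_E(-E) \iso \ko_E(1)$ and $\R p_*\ko_E \iso \ko_Z$, the degree-zero term contributes $\Hom_Z(F,G)$, while the degree $-1$ term carries a factor $\R p_*\ko_E(-1) = 0$; thus $\Phi_k$ is fully faithful and $\Db(Z)_k := \Phi_k\Db(Z) \iso \Db(Z)$, which gives the asserted equivalences.

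Then I would verify semiorthogonality, all of it reduced to the vanishing $\R p_*\ko_E(m) = 0$ for $-(d-1) \leq m \leq -1$, the relative form of $H^\bullet(\PP^{d-1},\ko(m)) = 0$ in that range (this is where $d > 1$ enters). For $j > k$ in $\{0, \ldots, d-2\}$ the same triangle expresses $\Hom(\Phi_j F, \Phi_k G)$ through $\R p_*\ko_E(k-j)$ and $\R p_*\ko_E(k-j-1)$, whose exponents lie in $[-(d-1),-1]$ and hence vanish. For orthogonality against $\varepsilon^*$ I would use $j^!(-) = \eL j^*(-) \otimes \ko_E(E)[-1]$ and the identity $\eL j^*\varepsilon^* \iso p^*\eL i^*$ (from $\varepsilon \comp j = i \comp p$); this rewrites $\Hom(\Phi_k F, \varepsilon^* A)$ through $\R p_*\ko_E(-1-k)$, which vanishes precisely for $k \in \{0, \ldots, d-2\}$. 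This last point is what simultaneously fixes the admissible range of twists and the number $d-1$ of copies of $\Db(Z)$.

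The main obstacle is generation, i.e. showing that $\kd := \langle \varepsilon^*\Db(X), \Phi_0\Db(Z), \ldots, \Phi_{d-2}\Db(Z)\rangle$ exhausts $\Db(\widetilde{X})$. Since each piece is the image of a fully faithful functor admitting both adjoints, $\kd$ is admissible, so it suffices to prove $\kd^\perp = 0$. For $C \in \kd^\perp$, the condition $\R\Hom(\varepsilon^* A, C) = 0$ for all $A$ forces $\R\varepsilon_* C = 0$, and since $\varepsilon$ is an isomorphism over $X \setminus Z$ this confines the support of $C$ to $E$; the conditions $\R\Hom(\Phi_k F, C) = 0$ yield, via the right adjoint $\R p_*\bigl(j^!(-) \otimes \ko_E(-k)\bigr)$ and $j^! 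C = \eL j^* C \otimes \ko_E(-1)[-1]$, the vanishings $\R p_*\bigl(\eL j^* C \otimes \ko_E(m)\bigr) = 0$ for $m = -1, \ldots, -(d-1)$. Feeding $\eL j^* C$ into the projective-bundle decomposition $\Db(E) = \langle p^*\Db(Z), p^*\Db(Z)\otimes\ko_E(1), \ldots, p^*\Db(Z)\otimes\ko_E(d-1)\rangle$, these $d-1$ vanishings confine $\eL j^* C$ to a single surviving component, and combining this with $\R\varepsilon_* C = 0$ should force $C = 0$. Converting these cohomological vanishings into the genuine vanishing of $C$ is the delicate step: I expect to carry it out by an induction on the twists $\ko_{\widetilde{X}}(-mE)$, using the divisor triangle $\ko_{\widetilde{X}}(-E) \to \ko_{\widetilde{X}} \to \ko_E$ and the identification $C \otimes \ko_E \iso \R j_*\,\eL j^* C$.
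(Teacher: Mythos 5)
The paper gives no proof of this proposition at all — it is quoted verbatim from Orlov's work on projective bundles and monoidal transformations — and your argument is essentially Orlov's original one: the same functors $\R j_*\bigl(p^*(-)\otimes\ko_E(k)\bigr)$, full faithfulness and semiorthogonality both reduced to the vanishing of $\R p_*\ko_E(m)$ for $-(d-1)\le m\le -1$, and generation via vanishing of the orthogonal complement. The outline is correct as far as it goes; the only piece you leave genuinely unexecuted is the final dévissage (that $\R\varepsilon_*C=0$ plus the $d-1$ vanishings on $\eL j^*C$ force $C=0$), which in Orlov's treatment is closed by expressing $\eL\varepsilon^*i_*D$ through the objects $\R j_*\bigl(p^*D\otimes\ko_E(k)\bigr)$ so as to recover the missing twist — your proposed induction on $\ko_{\widetilde{X}}(-mE)$ is a workable substitute but would need to be carried out.
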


\subsection{Classical representabilities and motives}

In this Section we outline a list of definitions of representabilities for the groups $A^i_{\ZZ}(X)$. This is
far for being exhaustive, especially in the referencing.
Indeed, giving a faithful list of all contributions
to these questions is out of the aim of these notes. Chow motives and their properties could give, through Conjecture
\ref{orlov-conj}, a way to connect categorical and classical representabilities. We also outline the basic
facts needed to stress the possible interplay between new and old definitions.

Let $X$ as usual be a smooth projective variety over an algebraically closed field $K$.
\begin{definition}[\cite{blochmurreFano}]
The group $A_{\ZZ}^i(X)$ is said to be \it weakly representable \rm if there exists a smooth projective curve $\Gamma$, a class
$z$ of a cycle in $CH_{\ZZ}^i(X\times \Gamma)$ and an algebraic subgroup $G \subset J(\Gamma)$ of the Jacobian variety
of $\Gamma$, such that the induced morphism
$$z_*:J(\Gamma) \rightarrow A^i_{\ZZ}(X)$$
is surjective with kernel $G$.
\end{definition}

When working with coefficients in $\QQ$, we have the following definition.

\begin{definition}\label{rat}
The group $A^i_{\QQ}(X)$ is \it rationally representable \rm if  
there exists a regular surjective morphism
$$z_*:J_{\QQ}(\Gamma) \rightarrow A^i_{\QQ}(X).$$

\end{definition}

Rational representability is a name that has been used several times in the literature, so it might lead to some misunderstanding.
We underline that Definition \ref{rat} is exactly the one from (\cite{gorch-gul-motives-and-repr}, page 5).
In the complex case, we have also a
stronger notion, which is called the \it Abel--Jacobi property \rm \cite{blochmurreFano}, which requires the existence
of an isogeny (i.e. a regular surjective morphism) $A^i_{\ZZ}(X) \to J^i(X)$ onto the $i$-th intermediate Jacobian.
The Abel-Jacobi property implies weak representability for smooth projective varieties defined on $\CC$.


\begin{definition}[\cite{beauvilleprym}]
An abelian variety $A$ is said to be the algebraic representative of $A^i_{\ZZ}(X)$ if there exists an isomorphism
$G:A^i_{\ZZ}(X) \rightarrow A$ which is universal. That is: for any morphism $g$ from $A^i_{\ZZ}(X)$ to an abelian variety $B$,
there exists a unique morphism $u:A \rightarrow B$ such that $u\circ G=g$. In this case we say that $A^i_{\ZZ}(X)$ is
\it algebraically representable. \rm
\end{definition}

The first examples of algebraic representatives are the Picard variety $\Pic^0(X)$ or the Albanese variety
$\Alb(X)$ if $n=1$ or, respectively, $n=\dim(X)$.
 
\begin{definition}
Let $X$ be a smooth projective variety of odd dimension $2n + 1$
and $A$ the algebraic representative of $A_{\ZZ}^{n+1}(X)$ via the canonical map $G:
A_{\ZZ}^{n+1}(X)\rightarrow A$. A polarization of $A$ with associated correspondence $\theta_A$ in $\corr(A)$, is the \it incidence polarization \rm with respect 
to $X$ if for all algebraic maps $f : T \rightarrow A_{\ZZ}^{n+1}(X)$ defined by a cycle $z$ in $CH_{\ZZ}^{n+1}(X\times T)$, we have
$$(G\circ f)^*\theta_A = (-1)^{n+1}I(z);$$
where $I(z)$ in $\corr(T)$ is the composition of the correspondences $z \in \corr(T,X)$ and $z\in \corr(X,T)$.
\end{definition}

There are many complex threefolds $X$ with negative Kodaira dimension, for which $A^2_{\ZZ}(X)$ is strongly represented by a generalized
Prym with incidence polarization. For these threefolds, we will show how categorical representability in dimension
1 gives a splitting of the intermediate Jacobian. A list of the main cases will be given in Section \ref{section-reconstr}.

\smallskip

A more modern approach to representability questions has to take Chow motives into account.
Let us recall their basic definitions and notations.
The category $\km_K$ of Chow motives over $K$
with rational coefficients is defined as follows: an object of $\km_K$ is a triple $(X,p,m)$, where $X$
is a variety, $m$ an integer and $p \in \corr^0(X,X)$ an idempotent,
called a \it projector\rm. Morphisms from $(X,p,m)$ to $(Y,q,n)$ are given by elements of $\corr^{n-m}(X,Y)$
precomposed with $p$ and composed with $q$.

There is a natural functor $h$ from the category of smooth projective schemes to the category of motives,
defined by $h(X) = (X,\id,0)$, and, for any morphism $\phi: X \to Y$, $h(\phi)$ being the correspondence given
by the graph of $\phi$. We write $\mathbf{1}:=(\Spec \, K, \id, 0)$ for the unit motive and $\LL := (\Spec \, K, \id, -1)$
for the Lefschetz motive,
and $M(-i) := M \otimes \LL^i$. Moreover, we have
$\Hom(\LL^d,h(X))= CH_{\QQ}^d(X)$ for all smooth projective schemes $X$ and all integers $d$.

If $X$ is irreducible of dimension $d$, the embedding $\alpha: pt \to X$ of the point defines a motivic map $\mathbf{1} \to h(X)$. We denote
by $h^0(X)$ its image and by $h^{\geq 1}(X)$ the quotient of $h(X)$ via $h^0(X)$. Similarly, $\LL^d$ is a quotient of $h(X)$, 
and we denote it by $h^{2d}(X)$.

In the case of smooth projective curves of positive genus there exists another factor which corresponds to the Jacobian variety of the 
curve. Let $C$ be a smooth projective connected curve, let us define a motive $h^1(C)$ such that we have a direct sum:
$$h(C) = h^0(C) \oplus h^1(C) \oplus h^2(C).$$

The upshot is that the theory of the motives $h^1(C)$ corresponds to that of Jacobian varieties (up to isogeny), in fact we have
$$\Hom(h^1(C),h^1(C')) = \Hom(J(C),J(C'))\otimes \QQ.$$

In particular, the full subcategory of $\km_K$ whose objects are direct summands of the motive $h^1(C)$ is equivalent
to the category of abelian subvarieties of $J(C)$ up to isogeny. Such motives can be called \it abelian. \rm We will say
that a motive is \it discrete \rm if it is the direct sum of a finite number of Lefschetz motives.

The strict interplay between motives and representability for threefolds is shown by Gorchinskiy and Guletskii.
In this case, the rational representability of $A^i_{\QQ}(X)$ for $i \geq 2$ is known (\cite{murre-resultat}).
In \cite{gorch-gul-motives-and-repr} it is proved that
$A^3_{\QQ}(X)$ is rationally representable if and only if the Chow motive of $X$ has a given
Chow-K\"{u}nneth decomposition.

\begin{theorem}[\cite{gorch-gul-motives-and-repr}, Thm 8]\label{theo-gor-gul}
Let $X$ be a smooth projective threefold. The group $A^3_{\QQ}(X)$ is rationally representable if and only if the motive $h(X)$
has the following Chow-K\"{u}nneth decomposition:
$$h(X) \cong \mathbf{1} \oplus h^1(X) \oplus \LL^{\oplus b} \oplus (h^1(J)(-1)) \oplus
(\LL^2)^{\oplus b} \oplus h^5(X) \oplus \LL^3,$$
where $h^1(X)$ and $h^5(X)$ are the Picard and Albanese motives respectively,
$b = b^2(X) = b^4(X)$ is the Betti number, and $J$ is a certain abelian
variety, which is isogenous to the intermediate Jacobian $J(X)$ if $K = \CC$.
\end{theorem}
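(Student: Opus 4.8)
The plan is to read the Chow--K\"unneth decomposition off the behaviour of zero-cycles through a decomposition-of-the-diagonal argument, and conversely to recover rational representability from the listed summands by a weight count. Throughout I use the identifications recalled above: $\Hom(\LL^d,h(X))=CH^d_\QQ(X)$, so that the algebraically trivial zero-cycles $A^3_\QQ(X)$ sit inside $\Hom(\LL^3,h(X))$. Murre's construction already supplies, for any smooth projective threefold, the four outer projectors $\pi_0,\pi_1,\pi_5,\pi_6$, forcing $h^0(X)=\mathbf 1$, $h^1(X)$ the Picard motive, $h^5(X)$ the Albanese motive and $h^6(X)=\LL^3$. Hence all the content lies in producing mutually orthogonal middle projectors $\pi_2,\pi_3,\pi_4$ with the prescribed isomorphism types, and this is precisely where the representability hypothesis must intervene.

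First I would prove that representability forces the decomposition. Rational representability of $A^3_\QQ(X)$ says that the degree-zero zero-cycles, and hence all of $CH^3_\QQ(X)$, are dominated by the Jacobian of a single curve $\Gamma$; equivalently $CH_0(X)_\QQ$ is supported on a curve. This is exactly the input of the Bloch--Srinivas method: a decomposition of the diagonal $\Delta_X$ in $CH^3_\QQ(X\times X)$ into a piece supported on $D\times X$, for a divisor $D$, and a piece supported on $X\times\Gamma$. Passing to the cohomological realization and comparing Hodge components yields $H^{2,0}(X)=H^{3,0}(X)=0$. By the Lefschetz $(1,1)$-theorem this makes $H^2(X)$ and, via hard Lefschetz, $H^4(X)$ spanned by algebraic classes, so the middle projectors $\pi_2,\pi_4$ cut out $\LL^{\oplus b}$ and $(\LL^2)^{\oplus b}$ with $b=b^2(X)=b^4(X)$; the surviving piece $h^3(X)$ is supported on $\Gamma$ and carries a level-one weight-three Hodge structure, hence is of abelian type and isomorphic to $h^1(J)(-1)$, where $J$ is the corresponding abelian variety, isogenous over $\CC$ to the intermediate Jacobian.

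For the converse I would start from the displayed decomposition and compute $CH^3_\QQ(X)=\bigoplus_i\Hom(\LL^3,h^i(X))$ summand by summand. A weight count shows that $\Hom(\LL^3,-)$ annihilates $\mathbf 1$, $h^1(X)$, $\LL^{\oplus b}$, $h^1(J)(-1)$ and $(\LL^2)^{\oplus b}$: for instance $\Hom(\LL^3,h^1(J)(-1))=\pi_1\circ CH^2_\QQ(\Gamma)=0$ since $\Gamma$ is a curve, and the Lefschetz summands vanish for degree reasons. Only $\LL^3=h^6(X)$ and the Albanese motive $h^5(X)$ contribute, the former recording the degree. Therefore $A^3_\QQ(X)\cong\Hom(\LL^3,h^5(X))$, which is, up to isogeny, $\Alb(X)_\QQ$; choosing a curve $\Gamma\subset X$ whose Albanese surjects onto $\Alb(X)$ produces the regular surjection $J_\QQ(\Gamma)\twoheadrightarrow A^3_\QQ(X)$ required by Definition \ref{rat}. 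Here it is essential that $h^2(X)$ and $h^4(X)$ be \emph{discrete}: any transcendental summand would contribute to $CH^3_\QQ(X)$ and destroy representability.

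The main obstacle is the first implication, and specifically the passage from cohomology to Chow motives. Bloch--Srinivas controls cohomology and rational equivalence only up to the auxiliary curve and divisor, whereas the theorem demands genuine, mutually orthogonal \emph{Chow} projectors whose motives are exactly $\LL^{\oplus b}$, $h^1(J)(-1)$ and $(\LL^2)^{\oplus b}$, not merely motives with the correct Betti--Hodge realization. Establishing that the transcendental middle motive vanishes as a Chow motive, and that $h^3(X)$ is isomorphic rather than merely isogenous to $h^1(J)(-1)$, is the delicate point; this is where the finite-dimensionality forced by representability, combined with the identification of $\QQ$-motives supported on a curve with summands of $h^1(\Gamma)$, does the real work. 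Keeping track of the polarization so that $J$ is identified with the intermediate Jacobian when $K=\CC$ is the last careful step.
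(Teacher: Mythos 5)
The paper does not actually prove this statement: it is imported verbatim from Gorchinskiy--Guletskii (Theorem 8 of the cited reference), so there is no internal proof to compare yours against. Judging your reconstruction on its own terms and against the source, the skeleton is right --- Murre's outer projectors for threefolds, a decomposition-of-the-diagonal argument in one direction, a weight count in the other --- and the converse direction as you present it is essentially complete.

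There are, however, two genuine gaps in the forward direction. First, you route the argument through Hodge theory ($H^{2,0}(X)=H^{3,0}(X)=0$, the Lefschetz $(1,1)$-theorem, a ``level-one weight-three Hodge structure''), which confines you to $K=\CC$; the theorem is stated over an arbitrary algebraically closed field, with $\CC$ entering only in the final clause about the intermediate Jacobian, and the source works with the support/coniveau consequences of the diagonal decomposition rather than with Hodge components. Second, and more seriously, the step you yourself flag as the heart of the matter --- promoting ``$H^2$ and $H^4$ algebraic, $H^3$ of abelian type'' to isomorphisms of \emph{Chow} motives $h^2(X)\cong\LL^{\oplus b}$, $h^3(X)\cong h^1(J)(-1)$, $h^4(X)\cong(\LL^2)^{\oplus b}$ --- is delegated to ``the finite-dimensionality forced by representability,'' and that is not a theorem: rational representability of $A^3_{\QQ}(X)$ is not known to imply Kimura finite-dimensionality of $h(X)$. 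The tool that actually closes this gap is the nilpotence of correspondences supported in small dimension (a Bloch--Srinivas-type lemma): the diagonal decomposition shows that the candidate middle projectors differ from explicit idempotents factoring through curves and divisors by correspondences that are nilpotent, which is what permits lifting idempotents and isomorphisms from homological to rational equivalence. Without invoking that lemma the passage from realizations to Chow motives, which is precisely what distinguishes this theorem from its cohomological shadow, remains unproved.
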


\section{Interactions between categorical and classical representabilities}\label{tre}

In this section, we will consider varieties defined over the complex numbers. This restriction
is not really necessary, since most of the constructions work over any algebraically closed
field. Anyway, in the complex case, we can simplify our treatment by dealing with
intermediate Jacobians. Moreover, it will be more simple to list examples without the need
to make the choice of the base field explicit for any case.

\subsection{Fully faithful functors and motives}\label{sec-fff-and-mot}

At the end of the last section we have seen that, in the case of threefolds, rational representability of $A^3_{\QQ}(X)$ is equivalent
to the existence of some Chow-K\"unneth decomposition. 
The first step in relating categorical and rational representability is exploiting an idea of Orlov about the
motivic decomposition which should be induced by a fully faithful functor between the derived categories of
smooth projective varieties. Assuming this conjecture we get that for threefolds categorical representability in
dimension 1 is a stronger notion than rational representability.

Let us sketch Orlov's idea \cite{orlov-motiv}. If $X$ and $Y$ are smooth projective varieties
of dimension respectively $n$ and $m$,
and $\Phi: \Db(Y) \to \Db(X)$ is a fully faithful functor, then it is of Fourier--Mukai type \cite{orlovequivk3,orlovequivall}.
Let $\ke$ in $\Db(X \times Y)$ be its kernel and $\kf$ in $\Db(X \times Y)$ the kernel
of its right adjoint $\Psi$, we have $\kf \simeq \ke^{\vee} \otimes pr_X^{*}\omega_X [\dim X]$ (see \cite{mukaiabelian}).
Consider $e:= ch(\ke) \Td(X)$ and $f:= ch(\kf) \Td(Y)$, two mixed rational cycles in $\CH^*_{\QQ}(X \times Y)$. We
denote by $e_i$ (resp. $f_i$) the $i$-th codimensional component of $e$ (resp. $f$), that is
$e_i, f_i \in \CH_{\QQ}^i (X \times Y)$. As correspondences they induce
motivic maps $e_i: h(Y) \to h(X)(i-n)$ and $f_j: h(X)(m-j) \to h(Y)$. The Grothendieck--Riemann--Roch
Theorem implies that $f.e := \bigoplus_{i=0}^{n+m} f_{n+m-i} e_i = \id_{h(Y)}$.

\begin{orlconj}
Let $X$ and $Y$ be smooth projective varieties and $\Phi: \Db(Y) \to \Db(X)$ be a fully faithful functor. Then
the motive $h(Y)$ is a direct summand of the motive $h(X)$.
\end{orlconj}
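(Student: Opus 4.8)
The plan is to exploit the Grothendieck--Riemann--Roch relation recalled just above the statement. Writing $e_i \colon h(Y) \to h(X)(i-n)$ and $f_j \colon h(X)(m-j)\to h(Y)$ for the graded pieces of the mixed correspondences attached to $\Phi$ and its right adjoint $\Psi$ (with kernels $\ke$ and $\kf$), we are handed the identity
\[
\sum_{i=0}^{n+m} f_{n+m-i}\comp e_i = \id_{h(Y)}
\]
in $\End_{\km_K}(h(Y))$, together with the fact that $\km_K$ is pseudo-abelian: every idempotent correspondence splits, by the very construction of the objects $(X,p,m)$. The formal principle I would invoke is that in any idempotent-complete additive category a morphism $\iota\colon A\to B$ admitting a retraction $\pi$ with $\pi\comp\iota=\id_A$ realises $A$ as a direct summand of $B$, the complementary projector being $\id_B - \iota\comp\pi$. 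So the whole problem reduces to manufacturing, out of the graded data $\{e_i\}$, $\{f_j\}$, a single degree-zero morphism $\iota\colon h(Y)\to h(X)$ together with a retraction.

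The first, essentially formal, step is to package the pieces into honest morphisms once Tate twists are allowed. Setting $\iota:=(e_i)_i\colon h(Y)\to \bigoplus_{i} h(X)(i-n)$ and $\pi:=(f_{n+m-i})_i\colon \bigoplus_{i}h(X)(i-n)\to h(Y)$, the displayed relation reads exactly $\pi\comp\iota=\id_{h(Y)}$. By the splitting principle this already proves, rigorously and for free, that $h(Y)$ is a direct summand of $\bigoplus_{i}h(X)(i-n)$, that is, of a finite sum of Tate twists of $h(X)$; the idempotent $\iota\comp\pi$ splits off a copy of $h(Y)$ inside the twisted sum. This is the part that the Riemann--Roch computation yields with no further input.

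The genuinely hard step -- the one separating a theorem from the conjecture -- is to collapse the Tate twists, passing from ``$h(Y)$ is a summand of $\bigoplus_i h(X)(i-n)$'' to ``$h(Y)$ is a summand of $h(X)$'' itself. Since the twists $h(X)(i-n)$ are pairwise non-isomorphic objects of $\km_K$, there is no formal mechanism forcing the various $e_i$ to assemble, compatibly, into a single degree-zero morphism $h(Y)\to h(X)$ with a retraction. The natural attempt is to fix a Chow--K\"unneth decomposition $h(X)=\bigoplus_k h^k(X)$, argue that each $e_i$ factors through a single weight summand $h^k(X)$, and verify that the twists then cancel so as to produce an honest projector $p\in\corr^0(X,X)$ with $\im(p)\iso h(Y)$; but controlling this factorisation demands geometric information on the kernel $\ke$ well beyond the numerical identity supplied by Riemann--Roch. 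I therefore expect this twist-collapse to be the \emph{main obstacle}: the pseudo-abelian formalism reduces the statement cleanly to it, yet in the generality stated it cannot presently be carried out, which is precisely why the assertion is recorded here as a conjecture and assumed in the sequel.
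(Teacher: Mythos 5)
You have correctly recognised that this statement is stated in the paper as a conjecture, not a theorem: the paper offers no proof of it, only Orlov's sketch (the cycles $e=\ch(\ke)\Td(X)$, $f=\ch(\kf)\Td(Y)$ and the Grothendieck--Riemann--Roch identity $\sum_i f_{n+m-i}\comp e_i=\id_{h(Y)}$) together with the remark that it is known when $\dim X=\dim Y=n$ and $\ke$ is supported in dimension $n$. Your reduction is sound and matches that sketch exactly: packaging the graded pieces into $\iota\colon h(Y)\to\bigoplus_i h(X)(i-n)$ and $\pi\colon\bigoplus_i h(X)(i-n)\to h(Y)$ with $\pi\comp\iota=\id$ does rigorously exhibit $h(Y)$ as a direct summand of a finite sum of Tate twists of $h(X)$ in the pseudo-abelian category $\km_K$, and the genuine obstruction is, as you say, collapsing the twists, i.e.\ showing each $e_i$ factors through the correct weight piece of a Chow--K\"unneth decomposition. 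This is precisely the mechanism the paper exploits in the special case it \emph{can} handle (Theorem \ref{from-Db-to-isogeny}): under hypothesis $(\star)$ all but one Chow--K\"unneth summand of $h(X)$ is discrete, so the restriction of $f.e$ to $h^1(\Gamma)$ is forced to equal $f_2.e_2$ alone, and the twist ambiguity disappears, yielding $h^1(\Gamma)(-1)$ as a summand of $h^3(X)$. So your proposal is not a proof of the conjecture --- nor could it be --- but it is a correct account of what the Riemann--Roch input gives for free and of where the open difficulty lies, consistent with the paper's treatment.
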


The Conjecture is trivially true for $Y$ a smooth point, in which case $\Phi(\Db(Y))$ is generated
by an exceptional object of $\Db(X)$.
In \cite{orlov-motiv}, it is proven that the Conjecture holds if $X$ and $Y$ have the same dimension $n$
and $\ke$ is supported in dimension $n$. This already covers some interesting example: if $X$ is a smooth
blow-up of $Y$, or if there is a standard flip from $X$ to $Y$. Using the same methods as in \cite{marcello-bolo-conicbd}
we will show that the conjecture holds (up to restricting to all direct summand of $h(Y)$) for some more examples, namely the 
case of $Y$ a curve and $X$ a rationally representable threefold (i.e., $A^i_{\QQ}(X)$ is rationally representable for
all $i$) with $h^1(X) = h^5(X) = 0$.

But let us first take a look to the simplest case, that is relating categorical representability in dimension 0 and
discreteness of the motive.
\begin{remark}\label{prop-categorical-repr-in-dim-0}
If a smooth projective variety $X$ is categorically representable in dimension $0$, then the motive
$h(X)$ is discrete.
\end{remark}
\begin{proof}
Being representable in dimension $0$ is equivalent to having a full exceptional sequence. Then the
proof is straightforward, we actually have more, that is $K(X) = \ZZ^l$, where $l$ is the number of
objects in the sequence.
\end{proof}

A way more interesting case relates categorical representability in dimension 1 and rational representability
for threefolds.
In this case, in light of Theorem \ref{theo-gor-gul}, we have a more specific conjecture.
\begin{conjecture}
If a smooth projective threefold $X$ is categorically representable in dimension $1$, then it is rationally
representable.
\end{conjecture}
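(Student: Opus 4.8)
The plan is to establish the implication conditionally on Orlov's Conjecture \ref{orlov-conj}, following the motivic strategy of \cite{marcello-bolo-conicbd}. By definition, categorical representability in dimension $1$ means that $\Db(X)$ admits a semiorthogonal decomposition by exceptional objects and derived categories of smooth projective curves,
$$\Db(X) = \langle \Db(\Gamma_1), \ldots, \Db(\Gamma_s), E_1, \ldots, E_t \rangle,$$
with the $\Gamma_i$ smooth projective connected curves and the $E_j$ exceptional objects. First I would record that, by Remark \ref{rem-grr-decomp}, this already gives a splitting $\CH^*_{\QQ}(X) = \bigoplus_i \CH^*_{\QQ}(\Gamma_i) \oplus \QQ^{\,t}$ of the rational Chow groups; the whole point is to upgrade this numerical splitting to a decomposition of the \emph{motive} $h(X)$.

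To do so, I would apply Conjecture \ref{orlov-conj} to each fully faithful functor $\Db(\Gamma_i) \to \Db(X)$, together with the cycle-theoretic machinery sketched before the conjecture. Writing $\ke$ for the kernel and $\kf$ for that of the right adjoint, the mixed cycles $e = \ch(\ke)\Td(X)$ and $f = \ch(\kf)\Td(\Gamma_i)$ define correspondences whose components give motivic maps between $h(\Gamma_i)$ and $h(X)$ with $f.e = \id_{h(\Gamma_i)}$, realising $h(\Gamma_i)$ as a retract of $h(X)$. The semiorthogonality of the decomposition is what should force the resulting projectors to be mutually orthogonal, so that $h(X)$ splits as a direct sum of Tate twists of the $h(\Gamma_i)$ and of the unit motive $\mathbf 1$. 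Since $h(\Gamma_i) = \mathbf 1 \oplus h^1(\Gamma_i) \oplus \LL$, this would exhibit $h(X)$ as a finite sum of abelian and discrete motives.

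It then remains to match this decomposition with the Chow--K\"unneth decomposition of Theorem \ref{theo-gor-gul}. For a threefold $A^1_{\QQ}(X)$ and $A^2_{\QQ}(X)$ are rationally representable by \cite{murre-resultat}, so the only point at issue is $A^3_{\QQ}(X)$, equivalently the shape of the middle motive $h^3(X)$. The even summands $\mathbf 1, \LL, \LL^2, \LL^3$ produced by the $h^0$ and $h^2$ of the curves and by the exceptional objects should account for the discrete part $\mathbf 1 \oplus \LL^{\oplus b} \oplus (\LL^2)^{\oplus b} \oplus \LL^3$, while the abelian summands $h^1(\Gamma_i)$ account for the odd motives $h^1(X)$, $h^3(X)$, $h^5(X)$. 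Once one checks that, after the appropriate Tate twist, the abelian part in the middle degree is of the form $h^1(J)(-1)$ for an abelian variety $J$ isogenous to a sub-sum of the $J(\Gamma_i)$, Theorem \ref{theo-gor-gul} applies and yields rational representability of $A^3_{\QQ}(X)$, hence of $X$.

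The hard part will be the two places where the argument outruns the literal content of Orlov's conjecture. First, Conjecture \ref{orlov-conj} asserts only that each $h(\Gamma_i)$ is a direct summand of $h(X)$; promoting the family of summands attached to the semiorthogonal decomposition to a \emph{complete orthogonal} decomposition requires the mutual orthogonality of the projectors, a refinement of the conjecture not available in general. Second, one must control weights: a priori the Fourier--Mukai kernel can spread $h^1(\Gamma_i)$ across several of the $h^{2k+1}(X)$, and the substance of the matching with Theorem \ref{theo-gor-gul} is to show that the non-trivial contribution concentrates in $h^3(X)$ and assembles into a single $h^1(J)(-1)$. In the conic bundle case of \cite{marcello-bolo-conicbd} this last step was supplied by the incidence property and the Prym description of $A^2_{\ZZ}(X)$; since those inputs are not available in the present generality, the statement is proposed here only as a conjecture.
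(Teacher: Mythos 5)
This statement is stated in the paper as a \emph{conjecture} and is given no proof there; the paper's only supporting discussion is the informal remark in the introduction that Conjecture \ref{orlov-conj} ``would then imply'' that a threefold categorically representable in dimension $1$ has a motive which is a finite sum of abelian and discrete motives, which via Theorem \ref{theo-gor-gul} would yield rational representability. Your proposal reconstructs exactly that intended strategy, and, to your credit, you correctly identify the two places where it fails to be a proof: Conjecture \ref{orlov-conj} only produces each $h(\Gamma_i)$ as a retract of $h(X)$ and does not supply mutually orthogonal projectors assembling into a full decomposition, and nothing controls in which Tate twist the abelian pieces $h^1(\Gamma_i)$ land, i.e.\ that they concentrate in $h^3(X)$ in the form $h^1(J)(-1)$ required by Theorem \ref{theo-gor-gul}. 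So there is no discrepancy with the paper to report: your text is an accurate account of why the statement is open, not a proof of it, and it should be presented as such.

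One further point worth noting: the paper's actual partial results run in the opposite logical direction. Theorem \ref{from-Db-to-isogeny} \emph{assumes} rational representability (hypothesis $\star$) together with $h^1(X)=h^5(X)=0$ in order to conclude that a fully faithful functor $\Db(\Gamma)\to\Db(X)$ yields $h^1(\Gamma)(-1)$ as a summand of $h^3(X)$; it is precisely the vanishing of $h^1(X)$ and $h^5(X)$ and the discreteness of the remaining summands that resolves your second obstruction (the weight concentration) in that setting. Without such hypotheses the weight problem you flag is genuine, and this is why the general statement remains a conjecture. Also, a small citation slip: the paper attributes to \cite{murre-resultat} the rational representability of $A^i_{\QQ}(X)$ for $i\geq 2$ with $A^3_{\QQ}(X)$ being the case characterized by \cite{gorch-gul-motives-and-repr}, rather than $A^1$ and $A^2$ as you wrote; this does not affect your argument.
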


If $X$ is a standard conic bundle over a rational surface and $\Gamma$ a smooth projective curve, the Chow-K\"unneth decomposition of $h(X)$
(see \cite{nagel-saito}) can be used to show that a fully faithful functor $\Db(\Gamma) \to
\Db(X)$ gives $h^1(\Gamma)(-1)$ as a direct summand of $h(X)$. In particular, this
gives an isogeny between $J(\Gamma)$ and an abelian subvariety of $J(X)$, and proves
(up to codimensional shfit for each direct summand of $h(\Gamma)$) Conjecture \ref{orlov-conj}
in this case. The proof in \cite{marcello-bolo-conicbd} is based on the fact that
the motive $h(X)$ splits into a discrete motive and in a unique abelian motive which
corresponds to $J(X)$. Let us make a first assumption
\begin{itemize}
 \item[($\star$)] $X$ is a smooth projective rationally representable threefold with $h^1(X) = 0$ and $h^5(X) = 0$. 
\end{itemize}

\begin{theorem}\label{from-Db-to-isogeny}
Suppose $X$ satisfies $\star$. If there is a smooth projective curve $\Gamma$ and
a fully faithful functor $\Db(\Gamma) \to \Db(X)$, then there exists an integer $j_i$ such that $h^i(\Gamma)(j_i)$ is
a direct summand of $h(X)$ for $i=0,1,2$, and there is an injective
morphism $J(\Gamma)_{\QQ} \to J(X)_{\QQ}$, that is an isogeny between $J(\Gamma)$ and an abelian subvariety
of $J(X)$. 
\end{theorem}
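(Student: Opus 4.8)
The plan is to read off the shape of $h(X)$ from hypothesis $\star$, realise $h(\Gamma)$ as a retract of twists of $h(X)$ through Orlov's total correspondence, and then force the abelian part of $h(\Gamma)$ onto the unique abelian summand of $h(X)$. First I would apply Theorem \ref{theo-gor-gul}: since $X$ is rationally representable with $h^1(X)=h^5(X)=0$, its motive is
\[
h(X)\cong \mathbf{1}\oplus \LL^{\oplus b}\oplus h^1(J)(-1)\oplus(\LL^2)^{\oplus b}\oplus \LL^3,
\]
so that $h(X)$ splits as a discrete motive together with a single abelian summand $h^1(J)(-1)$, where $J$ is isogenous to $J(X)$. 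On the curve side I would use the Chow--K\"unneth decomposition $h(\Gamma)=\mathbf{1}\oplus h^1(\Gamma)\oplus\LL$ (reducing to connected components if $\Gamma$ is disconnected), with $h^1(\Gamma)$ the abelian factor attached to $J(\Gamma)$.

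Next I would feed $\Phi$ into the machinery recalled just before Conjecture \ref{orlov-conj}. With $\ke$ the Fourier--Mukai kernel and $\kf$ that of the right adjoint, the mixed cycles $e=\ch(\ke)\Td(X)$ and $f=\ch(\kf)\Td(\Gamma)$ yield motivic maps $e_i\colon h(\Gamma)\to h(X)(i-n)$ and $f_j\colon h(X)(m-j)\to h(\Gamma)$ with $f.e=\id_{h(\Gamma)}$. Collecting the $e_i$ into $E\colon h(\Gamma)\to\bigoplus_i h(X)(i-n)$ and the matching $f_{n+m-i}\colon h(X)(i-n)\to h(\Gamma)$ into $F$ in the opposite direction, the relation $F\comp E=\id_{h(\Gamma)}$ exhibits $h(\Gamma)$ as a direct summand of a finite sum of Tate twists of $h(X)$; composing with each projector $\pi_i^\Gamma$ then displays every piece $h^i(\Gamma)$ as a summand of such a sum.

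The heart of the argument is to show that $h^1(\Gamma)$ is forced onto the abelian summand. Composing $f.e=\id$ on both sides with $\pi_1^\Gamma$ writes $\id_{h^1(\Gamma)}$ as a finite sum of composites $h^1(\Gamma)\to h(X)(i-n)\to h^1(\Gamma)$; projecting each intermediate term onto its discrete and abelian parts splits the composites accordingly. A contribution routed through a Lefschetz summand $\LL^c$ is a composite $h^1(\Gamma)\to\LL^c\to h^1(\Gamma)$ whose Betti realisation vanishes, since a morphism of pure Hodge structures relating $H^1(\Gamma)$ to something of even weight is zero. But the realisation $\End(h^1(\Gamma))_{\QQ}=\End(J(\Gamma))_{\QQ}\hookrightarrow \End(H^1(\Gamma))$ is faithful, so such a homologically trivial endomorphism is already zero. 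Hence the discrete part contributes nothing and $\id_{h^1(\Gamma)}$ factors through twists of $h^1(J)(-1)$; the same faithfulness singles out the one twist matching weights, realising $h^1(\Gamma)$ as a summand of $h^1(J)$ (equivalently $h^1(\Gamma)(-1)$ as a summand of $h(X)$, so $j_1=-1$, while the Tate pieces give $j_0=j_2=0$). Untwisting and invoking the equivalence between abelian motives and abelian varieties up to isogeny turns $h^1(\Gamma)\hookrightarrow h^1(J)$ into an injection $J(\Gamma)_{\QQ}\to J_{\QQ}\sim J(X)_{\QQ}$, i.e. an isogeny of $J(\Gamma)$ onto an abelian subvariety of $J(X)$.

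I expect the main obstacle to be exactly this third step. At the Chow level the groups $\Hom(\LL^c,h^1(\Gamma))$ are genuinely nonzero, being Abel--Jacobi (``phantom'') classes invisible to realisation, so one cannot argue by naive orthogonality of weights. The decisive point is that these phantom contributions cancel once assembled into an \emph{endomorphism} of $h^1(\Gamma)$, because endomorphisms of an abelian variety inject into endomorphisms of its $H^1$; this is the mechanism carried out in \cite{marcello-bolo-conicbd}, and tracking which twist survives is precisely what pins down the integers $j_i$.
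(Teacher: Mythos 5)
Your proposal is correct and follows essentially the same route as the paper: apply Theorem \ref{theo-gor-gul} under $\star$ to isolate the unique abelian summand $h^1(J)(-1)$ of $h(X)$, use Orlov's cycles $e,f$ with $f.e=\id_{h(\Gamma)}$, and kill the contributions of the discrete summands to $\id_{h^1(\Gamma)}$ so that $h^1(\Gamma)$ lands in $h^1(J)$ up to twist. The vanishing argument you spell out (phantom classes cancelling in the composite because $\End(J(\Gamma))_{\QQ}$ injects into $\End(H^1(\Gamma))$) is exactly the content of \cite[Lemma 4.2]{marcello-bolo-conicbd}, which the paper simply cites at that step.
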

\begin{proof}
Let $\ke$ and $\kf$, and $e$ and $f$ as before. Consider $h^0(\Gamma)= \mathbf{1}$, we have
$f.e_{\vert h^0(\Gamma)} = \id_{h^0(\Gamma)}$, which gives the claim, but not an
explicit value of $i_0$. The same argument works for $h^2(\Gamma) = \LL$.

For $h^1(\Gamma)$, we only need the case where $g(\Gamma) >0$, and
we can use the same argument as in \cite{marcello-bolo-conicbd} Lemma 4.2 : since
all but one addendum of $h(X)$ are discrete, the map $f.e_{\vert h^1(\Gamma)} = \id_{h^1(\Gamma)}$ is given
by $f_2.e_2$, which proves that $h^1(\Gamma)(-1)$ is a direct summand of $h^3(X)=M^1(J)(-1)$. 
\end{proof}

\begin{corollary}\label{corollary-cat-rep-and-isogeny}
Suppose $X$ satisfies $\star$ and let $\{\Gamma_i\}_{i=1}^k$ be smooth projective curves
of positive genus. If $\Db(X)$ is categorically representable
in dimension 1 by the categories $\Db(\Gamma_i)$ and by exceptional objects, then $J(X)$ is isogenous
to $\oplus_{i=1}^k J(\Gamma_i)$.
\end{corollary}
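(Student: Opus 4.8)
The plan is to deduce the corollary from Theorem~\ref{from-Db-to-isogeny} together with the uniqueness of the abelian summand of $h(X)$ supplied by the hypothesis $\star$ and Theorem~\ref{theo-gor-gul}. First I would observe that a categorical representability in dimension $1$ by the categories $\Db(\Gamma_i)$ and exceptional objects means precisely that we have a semiorthogonal decomposition
$$\Db(X) = \langle \Db(\Gamma_1), \ldots, \Db(\Gamma_k), E_1, \ldots, E_r \rangle,$$
where each embedding $\Db(\Gamma_i) \to \Db(X)$ is a fully faithful functor and the $E_j$ are exceptional objects. Applying Theorem~\ref{from-Db-to-isogeny} to each inclusion $\Db(\Gamma_i) \to \Db(X)$ produces, for every $i$, an injective morphism $J(\Gamma_i)_{\QQ} \to J(X)_{\QQ}$, realizing $h^1(\Gamma_i)(-1)$ as a direct summand of $h^3(X) = h^1(J)(-1)$.

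Next I would assemble these individual summands into a single map. The point is that under $\star$, Theorem~\ref{theo-gor-gul} forces the motive $h(X)$ to have exactly one abelian summand, namely $h^1(J)(-1)$ with $J$ isogenous to $J(X)$; all the other summands are discrete (sums of Lefschetz motives) together with the now-vanishing $h^1(X)$ and $h^5(X)$. Consequently each $h^1(\Gamma_i)(-1)$ must land inside this unique abelian summand. The semiorthogonality of the decomposition should guarantee that the images of the distinct $\Db(\Gamma_i)$ are ``independent'': the induced abelian subvarieties $J(\Gamma_i) \subset J(X)$ meet only in finite subgroups, so that the sum map
$$\bigoplus_{i=1}^k J(\Gamma_i) \lto J(X)$$
is an isogeny onto its image. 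The exceptional objects $E_j$ contribute only discrete motives (by Remark~\ref{prop-categorical-repr-in-dim-0}, or rather by the point-case of Orlov's conjecture noted after its statement), hence contribute nothing to the abelian part.

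Finally I would use the decomposition of the Grothendieck group and Chow groups from Remark~\ref{rem-grr-decomp}, or equivalently a dimension count, to see that the map is surjective and therefore an isogeny. The total abelian part of $h(X)$ is $h^1(J)(-1)$, and matching it summand by summand against $\bigoplus_i h^1(\Gamma_i)(-1)$ shows $\dim J = \sum_i g(\Gamma_i)$, so the injective isogeny $\bigoplus_i J(\Gamma_i) \to J$ is in fact an isogeny onto all of $J \sim J(X)$.

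The main obstacle I anticipate is justifying the independence of the images, that is, that the several summands $h^1(\Gamma_i)(-1)$ assemble to a \emph{direct} summand rather than merely each being a summand separately. For this I would lean on the semiorthogonality: the orthogonality relations $\Hom(\cat{A}_i, \cat{A}_j) = 0$ for $i > j$ translate, via the Fourier--Mukai kernels and the correspondence calculus set up before the statement of Conjecture~\ref{orlov-conj}, into a compatibility of the projectors onto the various $h^1(\Gamma_i)(-1)$. The delicate point is that semiorthogonality is a one-sided vanishing, so I expect to need the interplay between the kernel $\ke_i$ of each functor and the kernel $\kf_i$ of its adjoint, together with the fact that the total identity correspondence $\mathrm{id}_{h(X)}$ decomposes along the semiorthogonal pieces, to control the cross terms and conclude that the projectors are mutually orthogonal on the abelian part. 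This is exactly the step where the argument of \cite{marcello-bolo-conicbd} is adapted, and where the principal polarization on $J(X)$ (through the incidence property implicit in the running hypotheses) will be used to upgrade the isogeny statement.
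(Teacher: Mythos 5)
Your argument is essentially the paper's: the authors deduce the corollary from Theorem~\ref{from-Db-to-isogeny} applied to each $\Db(\Gamma_i)$ together with Remark~\ref{rem-grr-decomp}, exactly as you propose. Two small remarks. First, the ``independence'' issue you worry about at length is already packaged in Remark~\ref{rem-grr-decomp}: a semiorthogonal decomposition induces a genuine \emph{direct sum} decomposition of $K_0$ and hence of $\CH^*_{\QQ}$, so the algebraically trivial part of $\CH^*_{\QQ}(X)$ is the direct sum of the $J(\Gamma_i)_{\QQ}$ (the exceptional objects contributing nothing), and since $\star$ kills $A^1$ and $A^3$ and identifies $A^2_{\QQ}(X)$ with $J(X)_{\QQ}$ up to isogeny, the count closes without any separate analysis of cross terms between projectors. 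Second, your final appeal to the incidence polarization is out of place here: hypothesis $\star$ does not include it (that is what $\natural$ adds, and it is what Theorem~\ref{reconstr-of-interm} and Corollary~\ref{ortjac} need to upgrade the isogeny to an isomorphism of principally polarized varieties), and for the mere isogeny asserted in this corollary it is not needed. Dropping that last step, your proof is correct and coincides with the intended one.
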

\begin{proof}
Use remark \ref{rem-grr-decomp}.
\end{proof}

\begin{remark}\bf{(Threefolds satisfying $\star$).} \rm
By \cite{gorch-gul-motives-and-repr, nagel-saito} Fano threefolds, threefolds fibered in Del Pezzo or Enriques surfaces over $\PP^1$
with discrete Picard group,
and standard conic bundles over rational surfaces satisfy assumptions of Theorem \ref{from-Db-to-isogeny}.
\end{remark}

\subsection{Reconstruction of the intermediate Jacobian}\label{section-reconstr}

The aim of this section is to show how, under appropriate hypothesis, 
categorical representability in dimension 1 for a threefold $X$ gives a splitting of the intermediate Jacobian $J(X)$.
Notice that in the case of curves the derived category carries the information about the principal polarization
of the Jacobian \cite{marcellocurves}.
In the case of threefolds, we need first of all the hypothesis of Theorem \ref{from-Db-to-isogeny}.
As we will see, the crucial hypothesis that will allow us to recover also the principal polarization
is that the polarization on $J(X)$ is an \textit{incidence polarization}.

\begin{itemize}
 \item[($\natural$)] $X$ is a smooth projective rationally and algebraically representable threefold with $h^1(X) = 0$ and $h^5(X) = 0$
and the algebraic representative of $A^2_{\ZZ}(X)$ carries an incidence polarization.
\end{itemize}

\begin{theorem}\label{reconstr-of-interm}
Suppose $X$ satisfies $\natural$. Let
$\Gamma$ be smooth projective curve and $\Db(\Gamma) \to \Db(X)$ fully faithful. Then there is an injective
morphism $J(\Gamma) \hookrightarrow J(X)$ preserving the principal polarization, that is $J(X) = J(\Gamma) \oplus A$ for
some principally polarized abelian variety $A$.
\end{theorem}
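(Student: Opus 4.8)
The plan is to upgrade the isogeny produced by Theorem~\ref{from-Db-to-isogeny} to an embedding of principally polarized abelian varieties, using the incidence polarization as the bridge. First I would invoke Theorem~\ref{from-Db-to-isogeny}: since $X$ satisfies $\natural$ (which is strictly stronger than $\star$), the fully faithful functor $\Db(\Gamma) \to \Db(X)$ yields an isogeny $\varphi: J(\Gamma) \to B$ onto an abelian subvariety $B \subset J(X)$, coming from the fact that $h^1(\Gamma)(-1)$ is a direct summand of $h^3(X)$. The content of the present theorem is that $\varphi$ can be taken to be an \emph{injective} morphism that respects the principal polarizations, so that $B$ splits off as a principally polarized factor.

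The key step is to compare the two natural polarizations. On the one hand, $J(\Gamma)$ carries its canonical principal polarization $\theta_\Gamma$ coming from the intersection form on $h^1(\Gamma)$. On the other hand, the algebraic representative $A$ of $A^2_{\ZZ}(X)$ (which is isogenous to $J(X)$ under the standing hypotheses $h^1=h^5=0$) carries the incidence polarization $\theta_A$ by assumption. The defining property of the incidence polarization is that for any cycle $z \in CH^2_{\ZZ}(X \times T)$ inducing $f: T \to A^2_{\ZZ}(X)$, one has $(G \circ f)^* \theta_A = -\, I(z)$, where $I(z)$ is the self-intersection correspondence of $z$. I would take $T = \Gamma$ and let $z$ be (the class realizing) the $e_2$-component of the Fourier--Mukai kernel, i.e.\ the very cycle that produced the isogeny in Theorem~\ref{from-Db-to-isogeny}. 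Then $I(z)$ computes the pullback of $\theta_A$ along $\varphi$, and the point is that the incidence formula identifies this pullback with the intersection pairing on $h^1(\Gamma)$, which is exactly $\theta_\Gamma$. This is the same mechanism as in \cite{marcello-bolo-conicbd}, where the incidence property and the Prym description were used to promote the isogeny to a polarized embedding.

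Once $\varphi^* \theta_A = \theta_\Gamma$ (up to the correct sign), standard abelian-variety theory finishes the argument: a homomorphism of abelian varieties pulling back a principal polarization to a principal polarization is necessarily injective, and its image is a principally polarized abelian subvariety that splits off as an orthogonal direct summand. Concretely, if $B = \varphi(J(\Gamma)) \subset J(X)$ is the image, then the restriction of the polarization of $J(X)$ to $B$ agrees with $\theta_\Gamma$, so $B$ is a principally polarized abelian subvariety; by the Poincar\'e reducibility theorem the complementary abelian subvariety $A := B^{\perp}$ provides a splitting $J(X) = J(\Gamma) \oplus A$ with $A$ again principally polarized.

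The main obstacle I anticipate is bookkeeping at the level of \emph{integral} rather than rational data. Theorem~\ref{from-Db-to-isogeny} only produces a morphism $J(\Gamma)_\QQ \to J(X)_\QQ$, i.e.\ an isogeny, because the motivic decomposition lives with rational coefficients; upgrading this to an honest injective morphism $J(\Gamma) \hookrightarrow J(X)$ of integral abelian varieties preserving polarizations is precisely where the incidence property does real work, by pinning down the pairing exactly (not merely up to isogeny). I would therefore be careful to track that the cycle $z$ used in the incidence formula is the same correspondence underlying the functor, so that the numerical match $\varphi^*\theta_A = \theta_\Gamma$ holds on the nose and forces the kernel of $\varphi$ to be trivial rather than merely finite. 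The remaining compatibility between the incidence polarization on $A$ and the principal polarization transported from $J(X)$ is guaranteed by hypothesis $\natural$ together with the classical identification of the algebraic representative with the intermediate Jacobian in the complex case.
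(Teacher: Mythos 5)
Your proposal follows the same route as the paper: first invoke Theorem~\ref{from-Db-to-isogeny} to obtain the isogeny, then use the incidence polarization (applied to the cycle $e_2$ underlying the Fourier--Mukai kernel, exactly as in the proof of Prop.~4.4 of \cite{marcello-bolo-conicbd}, which the paper simply cites) to promote it to an injective morphism of principally polarized abelian varieties, with Poincar\'e reducibility giving the splitting. The only quibble is the sign in the incidence formula (for a threefold $n=1$, so $(-1)^{n+1}I(z)=+I(z)$, not $-I(z)$), which you flag yourself and which does not affect the argument.
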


\begin{proof}
From Theorem \ref{from-Db-to-isogeny} we get an isogeny. As in the proof of \cite[Prop. 4.4]{marcello-bolo-conicbd},
the incidence property shows that this isogeny is an injective morphism respecting the principal polarizations.
\end{proof}

\begin{corollary}\label{ortjac}
Suppose $X$ satisfies $\natural$ and let $\{\Gamma_i\}_{i=1}^k$ be smooth projective curves
of positive genus. If $\Db(X)$ is categorically representable
in dimension 1 by the categories $\Db(\Gamma_i)$ and by exceptional objects, then $J(X)$ is isomorphic
to $\oplus_{i=1}^k J(\Gamma_i)$ as principally polarized variety.
\end{corollary}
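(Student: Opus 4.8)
The plan is to treat each summand of the decomposition separately and then assemble the pieces. Since $\natural$ implies $\star$, Corollary \ref{corollary-cat-rep-and-isogeny} already furnishes an isogeny $\bigoplus_{i=1}^k J(\Gamma_i)\to J(X)$; in particular $\sum_i \dim J(\Gamma_i)=\dim J(X)$. Moreover, each inclusion $\Db(\Gamma_i)\hookrightarrow\Db(X)$ coming from the semiorthogonal decomposition is a fully faithful functor, so Theorem \ref{reconstr-of-interm} produces an injective morphism $\phi_i\colon J(\Gamma_i)\hookrightarrow J(X)$ pulling back the principal polarization of $J(X)$ to the principal polarization of $J(\Gamma_i)$, and realizing a splitting $J(X)=\phi_i(J(\Gamma_i))\oplus A_i$ of principally polarized abelian varieties. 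It therefore remains only to upgrade the isogeny to an isomorphism respecting the principal polarizations, which is equivalent to showing that the subvarieties $\phi_i(J(\Gamma_i))$ are \emph{pairwise orthogonal} with respect to the principal polarization of $J(X)$.

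This orthogonality is the step I expect to be the main obstacle, since it does not follow formally from the fact that each $\phi_i$ preserves the polarization: subvarieties on which a principal polarization restricts principally can assemble into an isogeny onto a principally polarized variety without the total map being a polarization-preserving isomorphism. To obtain it I would exploit the semiorthogonality of the decomposition. For $i\neq j$ the vanishing $\Hom_{\Db(X)}(\Phi_j(-),\Phi_i(-))=0$ translates, at the level of Fourier--Mukai kernels, into the vanishing of the composition of the correspondences $z_i$ and $z_j$ that define the maps $J(\Gamma_i)\to J(X)$ and $J(\Gamma_j)\to J(X)$. Because $X$ satisfies $\natural$, the principal polarization of $J(X)$ is the incidence polarization, so the polarization pairing between $\phi_i(J(\Gamma_i))$ and $\phi_j(J(\Gamma_j))$ is computed by the mixed incidence built from $z_i$ and $z_j$; the $\Hom$-vanishing then forces this pairing to vanish. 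This is exactly the mechanism behind \cite[Prop. 4.4]{marcello-bolo-conicbd}, and the technical care required is to check that the one-sided nature of semiorthogonality still yields the symmetric vanishing needed for orthogonality of the polarization.

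With pairwise orthogonality in hand, the conclusion follows by a purely abelian-variety induction on $k$. Since $\phi_1$ preserves the principal polarization, Theorem \ref{reconstr-of-interm} gives $J(X)=\phi_1(J(\Gamma_1))\oplus B$ as principally polarized abelian varieties, with $B=\phi_1(J(\Gamma_1))^{\perp}$ again principally polarized. Orthogonality forces $\phi_j(J(\Gamma_j))\subset B$ for all $j\geq 2$, each inclusion still restricting the principal polarization of $B$ to the principal polarization of $J(\Gamma_j)$, and $\sum_{j\geq 2}\dim J(\Gamma_j)=\dim B$. Iterating the same splitting on $(B,\theta|_B)$ and the subvarieties $\phi_j(J(\Gamma_j))$ yields $B\cong\bigoplus_{j\geq 2}J(\Gamma_j)$ as principally polarized varieties, whence $J(X)\cong\bigoplus_{i=1}^k J(\Gamma_i)$ as a principally polarized abelian variety, as claimed.

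I note that the exceptional objects in the decomposition play no role in this last argument: via Remark \ref{rem-grr-decomp} they contribute only discrete (Lefschetz) summands to the motive and hence nothing to the Jacobian, so they can be ignored throughout the reduction to the curve factors.
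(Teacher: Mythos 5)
Your first two steps are exactly the right inputs (Corollary \ref{corollary-cat-rep-and-isogeny} gives the isogeny $\bigoplus_i J(\Gamma_i)\to J(X)$, and Theorem \ref{reconstr-of-interm}, applied to each fully faithful inclusion $\Db(\Gamma_i)\hookrightarrow\Db(X)$, makes each image $B_i=\phi_i(J(\Gamma_i))$ a principally polarized direct summand of $J(X)$), and you correctly identify that what remains is pairwise orthogonality of the $B_i$. But the way you propose to get orthogonality is where the proof is genuinely incomplete: you only sketch how semiorthogonality of the kernels should force the vanishing of the mixed incidence pairing, and you yourself flag, without resolving, the real difficulty --- semiorthogonality gives $\Hom(\Phi_jE,\Phi_iF)=0$ only for $j>i$, and turning this one-sided categorical vanishing into the vanishing of a composition of correspondences in $\corr(\Gamma_i\times\Gamma_j)$ is precisely the technical content that would need to be supplied. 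As written, the central step of your argument is an announced obstacle rather than a proof.

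The gap closes much more cheaply, and this is the route the paper (implicitly) takes: it invokes no further categorical input at all, only ``the unicity of the splitting of the intermediate Jacobian,'' i.e.\ the Clemens--Griffiths theorem that a principally polarized abelian variety decomposes uniquely into indecomposable principally polarized factors, which are moreover unique as abelian subvarieties. Since each $B_i$ is a principally polarized direct summand, each $B_i$ is the sum of a subset $S_i$ of these canonical indecomposable factors of $J(X)$. If some factor belonged to both $S_i$ and $S_j$ for $i\neq j$, the addition map $\bigoplus_i B_i\to J(X)$ would have positive-dimensional kernel, contradicting that it is an isogeny; so the $S_i$ are pairwise disjoint, and the dimension count from the isogeny shows they exhaust all factors. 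Pairwise orthogonality (and hence your final assembly step) is then automatic, because distinct canonical factors are orthogonal for $\theta$. So your overall architecture is sound and your concluding induction is fine, but you should replace the unproved incidence-pairing argument by this uniqueness argument; the semiorthogonality of the decomposition has already done all its work in producing the isogeny and the individual splittings.
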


\begin{remark}\label{remark-list-of-3folds}\bf{(Threefolds satisfying $\natural$).} \rm
The assumptions of Theorem \ref{reconstr-of-interm} seem rather restrictive. Anyway, they are satisfied by
a quite big class of smooth projective threefolds with $\kappa_X < 0$. The Chow-K\"unneth decomposition
for the listed varieties
is provided by \cite{nagel-saito} for conic bundles and by \cite{gorch-gul-motives-and-repr} in any other case.
In the following list the references
point out the most general results about strong representability and incidence property.
Giving an exhaustive list of all the results and contributors would be out of reach
(already in the cubic threefold case). We will consider Fano threefolds with Picard number one only. The
interested reader could find an exhaustive treatment in \cite{isko-prok-fano}.
\begin{itemize}

\item[1)] Fano of index $>2$: $X$ is either $\PP^3$ or a smooth quadric.

\item[2)] Fano of index $2$: $X$ is a quartic double solid \cite{tihoquarticsolid} , or a smooth cubic in $\PP^4$
\cite{clemensgriffiths}, or an intersection of two quadrics in $\PP^5$ \cite{reidphd}, or a $V_5$ (in the last case $J(X)$ is trivial).

\item[3)] Fano of index $1$: $X$ is a general sextic double solid \cite{ceresaverra}, or
a smooth quartic in $\PP^4$ \cite{blochmurreFano},
or an intersection of a cubic and a quadric in $\PP^5$ \cite{blochmurreFano}, or the intersection of three
quadrics in $\PP^6$ \cite{beauvilleprym}, or a $V_{10}$ \cite{logachevV10,iliev10}, or a $V_{12}$ \cite{ilievmarkuV12} ($J(X)$
is the jacobian of a genus 7 curve), 
or a $V_{14}$  \cite{iliev-marku-v14} (in which case the representability is related to the birational map to a smooth cubic threefold),
or a general $V_{16}$ \cite{ilievV16, mukaigranmisto}, or a general $V_{18}$ \cite{ilievamanovella,isko-prok-fano}
($J(X)$ is the jacobian of a genus 2 curve), or a $V_{22}$ (and the Jacobian is trivial).

\item[4)] Conic bundles: $X \to S$ is a standard conic bundle over a rational surface \cite{beauvilleprym,beltrachow}, this is the case examined in
\cite{marcello-bolo-conicbd}.

\item[5)] Del Pezzo fibrations: $X \to \PP^1$ is a Del Pezzo fibration with $2\leq K_X^2 \leq 5$ \cite{kanevdp1,kanevdp2}.
\end{itemize}

\end{remark}
From the unicity of the splitting of the intermediate Jacobian we can easily infer the following.

\begin{corollary}
Suppose $X$ satisfies $\natural$ and is categorically representable in dimension 1, with semiorthogonal decomposition
$$\Db(X)= \langle \Db(\Gamma_1),\ldots,\Db(\Gamma_k),E_1,\ldots,E_l\rangle.$$
Then there is no fully faithful functor $\Db(\Gamma) \to \Db(X)$ unless $\Gamma \simeq \Gamma_i$ for some $i \in \{ 1,\ldots,k\}$.
Moreover, the semiorthogonal decomposition is essentially unique, that
is any semiorthogonal decomposition of $\Db(X)$ by smooth projective curves and exceptional objects is
given by all and only the curves $\Gamma_i$ and $l$ exceptional objects.
\end{corollary}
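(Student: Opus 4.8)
The plan is to use the splitting of the intermediate Jacobian as a principally polarized abelian variety, together with the uniqueness of such a decomposition, to constrain which curves can map fully faithfully into $\Db(X)$. The key external ingredient is the classical fact that a principally polarized abelian variety decomposes uniquely (up to reordering and isomorphism) into a product of indecomposable principally polarized factors. Since Corollary~\ref{ortjac} gives
$$J(X) \cong \bigoplus_{i=1}^k J(\Gamma_i)$$
as principally polarized abelian varieties, and each $J(\Gamma_i)$ is indecomposable as a principally polarized abelian variety (because $\Gamma_i$ has positive genus, its Jacobian carries the irreducible theta polarization), this product is the canonical factorization of $J(X)$.

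\medskip

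First I would establish the statement about fully faithful functors. Suppose $\Db(\Gamma) \to \Db(X)$ is fully faithful for some smooth projective curve $\Gamma$. If $\Gamma$ has genus zero it contributes no abelian factor, so assume $g(\Gamma)>0$. By Theorem~\ref{reconstr-of-interm}, applied under hypothesis $\natural$, we obtain an injection $J(\Gamma) \hookrightarrow J(X)$ preserving the principal polarization, so that $J(X) = J(\Gamma) \oplus A$ for some principally polarized $A$. Comparing this with the canonical factorization $J(X) \cong \bigoplus_i J(\Gamma_i)$ and invoking uniqueness of the decomposition into indecomposable principally polarized factors, the indecomposable factor $J(\Gamma)$ must be isomorphic to one of the $J(\Gamma_i)$ as a principally polarized abelian variety. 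By the Torelli theorem, an isomorphism of Jacobians as principally polarized abelian varieties forces $\Gamma \simeq \Gamma_i$, which gives the first assertion.

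\medskip

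For the uniqueness of the semiorthogonal decomposition, I would argue as follows. Let
$$\Db(X) = \langle \Db(\Gamma_1'),\ldots,\Db(\Gamma_{k'}'),E_1',\ldots,E_{l'}'\rangle$$
be any semiorthogonal decomposition by derived categories of smooth projective curves and exceptional objects; by the previous remark on curves of genus zero, we may assume every $\Gamma_j'$ has positive genus after absorbing the rational ones into exceptional objects. Each inclusion $\Db(\Gamma_j') \hookrightarrow \Db(X)$ is fully faithful, so by the first part each $\Gamma_j'$ is isomorphic to some $\Gamma_i$. Conversely, applying Corollary~\ref{ortjac} to this second decomposition yields $J(X) \cong \bigoplus_{j=1}^{k'} J(\Gamma_j')$, and matching against the canonical factorization of $J(X)$ shows that the multiset $\{J(\Gamma_j')\}$ coincides with $\{J(\Gamma_i)\}$; hence $k'=k$ and the curves of positive genus appearing are exactly the $\Gamma_i$. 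Finally, the number of exceptional objects is pinned down by comparing ranks: Remark~\ref{rem-grr-decomp} gives $K_0(X) \cong \bigoplus_i K_0(\Gamma_i) \oplus \ZZ^{l}$, and since the curve contributions agree, the rank of the free part forces $l'=l$.

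\medskip

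\emph{The hard part will be} ensuring that the uniqueness of the principally polarized decomposition transfers cleanly to uniqueness of the curves, rather than merely of their Jacobians; this is exactly where the Torelli theorem is essential, since a priori non-isomorphic curves could have isogenous (though here, isomorphic as polarized varieties) Jacobians. A secondary subtlety is the bookkeeping of genus-zero curves: such a $\Db(\PP^1)$ splits further into exceptional objects, so one must be careful to phrase uniqueness ``up to regrouping rational components as exceptional objects'' and to count $l$ accordingly. Everything else reduces to the structural results already proved.
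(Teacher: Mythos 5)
Your argument is correct and is exactly the route the paper intends: the paper gives no written proof beyond the remark that the corollary follows ``from the unicity of the splitting of the intermediate Jacobian,'' and your combination of Theorem \ref{reconstr-of-interm}, Corollary \ref{ortjac}, the unique factorization of a principally polarized abelian variety into indecomposable principally polarized factors (each $J(\Gamma_i)$ being indecomposable), and Torelli is precisely that argument spelled out. Your extra bookkeeping of genus-zero components and of the count $l$ via $K_0$ is a sensible filling-in of details rather than a different approach.
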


\begin{corollary}\label{cor-no-split-no-fm}
Suppose $X$ satisfies $\natural$, $\Gamma$ is a smooth projective curve of positive genus
and there is no splitting $J(X) = J(\Gamma) \oplus A$. Then there is no fully faithful functor
$\Db(\Gamma) \to \Db(X)$.
\end{corollary}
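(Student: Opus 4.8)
The plan is to deduce this statement as the immediate contrapositive of Theorem \ref{reconstr-of-interm}. The corollary and that theorem are logically dual: the theorem asserts that the existence of a fully faithful functor $\Db(\Gamma) \to \Db(X)$ forces a splitting $J(X) = J(\Gamma) \oplus A$ respecting the principal polarization, while the corollary asserts that the absence of such a splitting forbids such a functor. So the argument is a one-line contradiction and requires no new geometric input.

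First I would argue by contradiction: assume, contrary to the hypothesis, that there does exist a fully faithful functor $\Db(\Gamma) \to \Db(X)$. Since $X$ satisfies $\natural$ by assumption, the hypotheses of Theorem \ref{reconstr-of-interm} are met verbatim (note that $\natural$ is strictly stronger than $\star$, and in particular all motivic and incidence-polarization assumptions are in force). Applying that theorem to the postulated functor yields an injective morphism $J(\Gamma) \hookrightarrow J(X)$ preserving the principal polarization, hence a decomposition $J(X) = J(\Gamma) \oplus A$ for some principally polarized abelian variety $A$. This directly contradicts the standing hypothesis that no such splitting exists, and the contradiction completes the proof.

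I would also remark on the role of the positive-genus assumption on $\Gamma$, which is what makes the statement nonvacuous. If $g(\Gamma) = 0$ then $J(\Gamma)$ is trivial and the splitting $J(X) = J(\Gamma) \oplus J(X)$ holds automatically, so the hypothesis that no splitting exists would be empty; the condition $g(\Gamma) > 0$ is precisely what guarantees $J(\Gamma)$ is a nontrivial principally polarized abelian variety and thus that the non-splitting hypothesis carries genuine content. No step here presents an obstacle, since all the substantive work — the passage from a Fourier--Mukai kernel to a motivic summand via Grothendieck--Riemann--Roch, and the upgrade of the resulting isogeny to a polarization-preserving embedding via the incidence property — has already been carried out in Theorem \ref{from-Db-to-isogeny} and Theorem \ref{reconstr-of-interm}.
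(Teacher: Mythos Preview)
Your proposal is correct and matches the paper's intent: the corollary is stated without proof precisely because it is the immediate contrapositive of Theorem \ref{reconstr-of-interm}, exactly as you argue. Your additional remark on the role of the positive-genus hypothesis is accurate but not needed for the logical argument.
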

The assumptions of Corollary \ref{cor-no-split-no-fm} are trivially satisfied if the
threefold satisfying $\natural$ has $J(X)=0$. A way more interesting case is when
the intermediate Jacobian is not trivial and has no splitting at all, in which case the
variety is not representable in dimension $<2$.

\begin{remark}\label{list-three-not-rep}\bf{(Threefolds not categorically representable in dimension $<2$)}. \rm
The assumptions of Corollary \ref{cor-no-split-no-fm} are satisfied by smooth threefolds with $J(X) \neq 0$
for all curve $\Gamma$ of positive genus in the following cases:
\smallskip

Either $X$ is a smooth cubic \cite{clemensgriffiths}, or a generic quartic threefold \cite{letiziamoratti}, or a generic complete intersection of type $(3,2)$
in $\PP^5$ \cite{beauvilleprym} or a symmetric one \cite{bovesym}, or the intersection of three quadrics in $\PP^7$ \cite{beauvilleprym},
or a standard conic bundle $X \to \PP^2$ degenerating along a curve
of degree $\geq 6$ \cite{beauvilleprym}, or a non-rational standard conic bundle $X \to S$ on a Hirzebruch surface
\cite{shokuprym}, or a non-rational Del Pezzo fibration $X \to \PP^1$ of degree four \cite{aleks-dp4}.

There are some other cases of Fano threefolds of specific type satisfying geometric assumptions. For a detailed treatment, see
\cite[Chapt. 8]{isko-prok-fano}.
\end{remark}

Notice that if $X$ is a smooth cubic threefold, the equivalence class of a notable admissible subcategory $\cat{A}_X$
(the orthogonal complement of $\{\ko_X,\ko_X(1)\}$) corresponds to the isomorphism class of $J(X)$ as principally polarized
abelian variety \cite{noicubic}; the proof is based on the reconstruction of the Fano variety and the techniques used
there are far away from the subject of this paper.

A natural question is if, under some hypothesis, one can give the inverse statement of Corollaries \ref{corollary-cat-rep-and-isogeny}
and \ref{ortjac}, that is: suppose that $X$ is a threefold satisfying either $\star$ or $\natural$, such
that $J(X) \simeq \oplus J(\Gamma_i)$. Can one describe a semiorthogonal decomposition of $\Db(X)$ by
exceptional objects and the categories $\Db(\Gamma_i)$? Notice that a positive
answer for $X$ implies a positive answer for all the smooth blow-ups of $X$.

\begin{remark}\label{list-of-iff}\bf(Threefolds with $\kappa_X <0$ categorically representable in dimension $\leq 1$). \rm
Let $X$ be a threefold satisfying $\star$ or $\natural$ and with $J(X) = \oplus J(\Gamma_i)$. Then if $X$
is in the following list (or is obtained by a finite number of smooth blow-ups from a variety in the list)
we have a semiorthogonal decomposition
$$\Db(X) = \langle \Db(\Gamma_1), \ldots, \Db(\Gamma_k), E_1, \ldots, E_l \rangle,$$
with $E_i$ exceptional objects.
\begin{itemize}
\item[1)] Threefolds with a full exceptional sequence: $X$ is $\PP^3$ \cite{beilinson},
or a smooth quadric \cite{kapranovquadric},
or a $\PP^1$-bundle over a rational surface or a $\PP^2$-bundle over $\PP^1$ \cite{orlovprojbund},
or a $V_5$ \cite{orlov-v5}, or a $V_{22}$ Fano threefold \cite{kuznev22}. 
\item[2)] Fano threefolds without any full exceptional sequence: $X$ is the complete intersection of two quadrics or a Fano
threefold of type $V_{18}$, and $J(\Gamma) \simeq J(X)$ with $\Gamma$ hyperelliptic.
The semiorthogonal decompositions are described in \cite{bondalorlov,kuznetHyperplane}, and are strikingly
related (as in the cases of $V_5$ and $V_{22}$ and of the cubic and $V_{14}$) by a correspondence in
the moduli spaces, as described in \cite{kuznetfanothreefolds}. $X$ is a $V_{12}$ Fano threefold
\cite{kuznetv12}, or a $V_{16}$ Fano threefold \cite{kuznetHyperplane}.
\item[3)] Conic bundles without any full exceptional sequence: $X \to S$ is a rational conic bundle over a minimal surface
\cite{marcello-bolo-conicbd}. If the degeneration locus of $X$ is either empty or a cubic in $\PP^2$, then $X$ is a
$\PP^1$-bundle and is listed in 1).
\item[4)] Del Pezzo fibrations: $X \to \PP^1$ is a quadric fibration with at most simple degenerations, in which case the
hyperelliptic curve $\Gamma \to \PP^1$ ramified along the degeneration appears naturally as the orthogonal complement of
an exceptional sequence of $\Db(X)$ \cite{kuznetconicbundles}. $X \to \PP^1$ is a rational Del Pezzo fibration of degree four.
In this case $X$ is birational to a conic bundle over a Hirzebruch surface
\cite{aleks-dp4} and the semiorthogonal decomposition will be described in a forthcoming paper \cite{asher-marcello-bolo}.
\end{itemize}
Notice that the first two items cover all classes of Fano threefolds with Picard number one whose members are all rational.
\end{remark}

\section[Developments and Questions]{Categorical representability and rationality: \\
further developments and open questions}

This last Section is dedicated to speculations and open questions about categorical representability and rationality.
The baby example of curves is full understood. A smooth projective curve $X$ over a field $K$ is categorically representable in
dimension 0 if and only if it is rational. Indeed, the only case where $\Db(X)$ has exceptional objects is
$X=\PP^1$, and $\Db(X) = \langle \ko_X,\ko_X(1) \rangle$.

Let us start with a trivial remark: the projective space $\PP^n$ over $K$ is categorically representable in dimension 0.
Then if $X$ is given by a finite number of smooth blow-ups of $\PP^n$, it is categorically representable in
codimension $\geq 2$. This is easily obtained from Orlov's blow-up formula (see Prop. \ref{blow-up-formula}).
More generally, if a smooth projective variety $X$ of dimension $\geq 2$ is categorically representable in
codimension $m$, then any finite chain of smooth blow-ups of $X$ is categorically representable in
codimension $\geq {\rm min}(2,m)$.

One could naively wonder about the inverse statement: if $X \to Y$
is a finite chain of smooth blow-ups and $X$ is categorically representable in codimension $m$, what can
we say about $Y$? Unfortunately, triangulated categories do not have enough structure to let us
compare different semiorthogonal decomposition. For example, the theory of mutations allows to do this only
in a few very special cases.

In this Section we present some more example to stress how the interaction between categorical representability
and rationality can be devloped further, and we point out some open question.
We deal with surfaces in \ref{surf} and with threefolds in \ref{threefolds}.
Then we will discuss in \ref{noncommutative} how categorical
representability for noncommutative varieties plays an important role in this frame, to deal with
varieties of dimension bigger than 3 in \ref{higherdim}. Finally, we compare in \ref{approaches}
our methods with recent approaches to birationality
problems via derived categories. We will work over the field $\CC$ for simplicity, even if many
problems and arguments do not depend on that.

\subsection{Surfaces}\label{surf}

If $X$ is a smooth projective rational surface,
then it is categorically representable in codimension 2. Indeed,
$X$ is the blow-up in a finite number of smooth points of a minimal rational surface, that is either $\PP^2$ or
$\FF_n$. Are there any other example of surfaces categorically representable in codimension 2? Notice that by
Proposition \ref{prop-categorical-repr-in-dim-0} such a surface would have a discrete motive, and even more:
we would have $K_0(X) = \ZZ^l$. In particular, if $K_0(X)$ is not locally free, then
$X$ is not categorically representable in dimension 0.

In general, an interesting problem is to construct exceptional sequences on surfaces with $p_g=q=0$, and to study their
orthogonal complement. Suppose for example that $X$ is an Enriques surface: a (non-full) exceptional collection of 10
vector bundles on $X$ is described in \cite{zube-enriques}. Since $K_0(X)$ is not locally free, we do not expect any
full exceptional collection. The orthogonal complement $\cat{A}_X$ turns then out to be a very interesting
object, related also to the geometry of some singular quartic double solid \cite{kuzne-ingalls}.
Using a motivic trick, we can prove that, under some assumption, a surface with $p_g=q=0$ is
either categorically representable in codimension 2 or not categorically representable in positive codimension.

\begin{proposition}\label{prop-surf-p=q=0}
Let $X$ be a surface with $h(X)$ discrete. Then for any curve $\Gamma$ of positive genus, there is no fully faithful
functor $\Db(\Gamma) \to \Db(X)$.
\end{proposition}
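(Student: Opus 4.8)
The plan is to run the Fourier--Mukai/Grothendieck--Riemann--Roch machinery of Subsection~\ref{sec-fff-and-mot} exactly as in the proof of Theorem~\ref{from-Db-to-isogeny}, and then to exploit that here $h(X)$ is \emph{entirely} discrete, rather than discrete plus one abelian summand as for the threefolds treated there. Suppose for contradiction that a fully faithful functor $\Db(\Gamma)\to\Db(X)$ exists. By \cite{orlovequivk3,orlovequivall} it is of Fourier--Mukai type; let $\ke$ and $\kf$ be the kernels of the functor and of its right adjoint, and set $e=\ch(\ke)\Td(X)$ and $f=\ch(\kf)\Td(\Gamma)$, with codimensional components $e_i,f_i$ as in the excerpt. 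Here $\dim X=2$ and $\dim\Gamma=1$, so Grothendieck--Riemann--Roch yields the identity $f.e=\bigoplus_i f_{3-i}e_i=\id_{h(\Gamma)}$, in which each summand factors as $h(\Gamma)\xrightarrow{e_i}h(X)(i-2)\xrightarrow{f_{3-i}}h(\Gamma)$. I emphasize that this uses only the established identity and not the full strength of Conjecture~\ref{orlov-conj}.

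First I would restrict this identity to the abelian summand of $h(\Gamma)$. Composing with the inclusion $h^1(\Gamma)\hookrightarrow h(\Gamma)$ and the projection $h(\Gamma)\twoheadrightarrow h^1(\Gamma)$ arising from $h(\Gamma)=h^0(\Gamma)\oplus h^1(\Gamma)\oplus h^2(\Gamma)$, I obtain a factorization of $\id_{h^1(\Gamma)}$ through $\bigoplus_i h(X)(i-2)$. Since $h(X)$ is discrete and any Tate twist of a finite sum of Lefschetz motives is again such a sum, the motive $\bigoplus_i h(X)(i-2)$ is discrete. Thus $\id_{h^1(\Gamma)}$ factors through a finite direct sum of Lefschetz motives $\LL^{c}$.

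The heart of the argument, and the step I expect to be the main obstacle, is to deduce from this that $\id_{h^1(\Gamma)}=0$, whence $J(\Gamma)=0$, contradicting $g(\Gamma)>0$. One cannot argue by naive $\Hom$-orthogonality, since $\Hom(\LL,h^1(\Gamma))\neq 0$ in general; the correct point is that an endomorphism of $h^1(\Gamma)$ which factors through a Lefschetz motive must vanish. To see this I would pass to Hodge realizations over $\CC$: each composite $h^1(\Gamma)\to\LL^{c}\to h^1(\Gamma)$ realizes to a composite of morphisms of pure Hodge structures $H^1(\Gamma)\to\QQ(-c)\to H^1(\Gamma)$, and since $H^1(\Gamma)$ has odd weight while every $\QQ(-c)$ has even weight, both arrows vanish, so the composite realizes to $0$. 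Using the identification $\End(h^1(\Gamma))=\End(J(\Gamma))\otimes\QQ$ recalled in the excerpt, together with the injectivity of $\End(J(\Gamma))\otimes\QQ\hookrightarrow\End_{\QQ}(H^1(\Gamma,\QQ))$ (an endomorphism of an abelian variety is determined by its action on $H^1$), I conclude that each such composite is zero in $\End(h^1(\Gamma))$, and hence so is their sum $\id_{h^1(\Gamma)}$. This forces $h^1(\Gamma)=0$, the desired contradiction; the same reasoning shows more generally that no abelian motive is a direct summand of a discrete motive.
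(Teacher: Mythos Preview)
Your proposal is correct and follows the same route as the paper's proof: set up the Fourier--Mukai kernels and cycles $e,f$, use Grothendieck--Riemann--Roch to get $f.e=\id_{h(\Gamma)}$, restrict to $h^1(\Gamma)$, and obtain a factorization of $\id_{h^1(\Gamma)}$ through a discrete motive. The paper stops there with the sentence ``which is impossible''; you supply the justification via Hodge realization and the weight/parity argument together with the injectivity $\End(J(\Gamma))\otimes\QQ\hookrightarrow\End_\QQ(H^1(\Gamma,\QQ))$, which is a standard and correct way to flesh out that step.
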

\begin{proof}
Suppose there is such a curve and such a functor $\Phi: \Db(\Gamma) \to \Db(X)$. Let $\ke$ denote the kernel of
$\Phi$ (which has to be of Fourier--Mukai type) and $\kf$ the kernel of its adjoint. Consider the cycles $e$ and $f$ described
in Section \ref{sec-fff-and-mot}, and recall that $f.e = \oplus_{i=0}^3 f_{3-i}.e_i = \id_{h(\Gamma)}$. Restricting now
to $h^1(\Gamma)$ we would have that $\id_{h^1(\Gamma)}$ would factor through a discrete motive, which is impossible.
\end{proof}

\begin{corollary}\label{cor-surf-non-rep}
Let $X$ be a surface with $h(X)$ discrete and $K_0(X)$ not locally free. Then $X$ is not categorically representable in codimension
$>0$.
\end{corollary}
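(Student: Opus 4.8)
The plan is to argue by contradiction, reducing representability in dimension $\leq 1$ to representability in dimension $0$ and then invoking the $K_0$-obstruction of Remark \ref{prop-categorical-repr-in-dim-0}.

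Suppose $X$ were categorically representable in codimension $>0$, that is, in dimension $\leq 1$. Then $\Db(X)$ admits a semiorthogonal decomposition $\Db(X) = \langle \cat{A}_1, \ldots, \cat{A}_l \rangle$ in which each $\cat{A}_i$ is equivalent to an admissible subcategory of some $\Db(Y_i)$ with $\dim Y_i \leq 1$; by Remark \ref{connectd} I may take each $Y_i$ connected, hence either a point or a smooth connected projective curve. The first step is to eliminate curves of positive genus. If some $Y_i$ is a curve $\Gamma$ of positive genus, then, since $\Db(\Gamma)$ has no proper admissible subcategory, the nontrivial admissible subcategory $\cat{A}_i$ must be all of $\Db(\Gamma)$; composing the equivalence $\Db(\Gamma) \simeq \cat{A}_i$ with the inclusion yields a fully faithful functor $\Db(\Gamma) \to \Db(X)$. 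As $h(X)$ is discrete by hypothesis, this contradicts Proposition \ref{prop-surf-p=q=0}. Hence every $Y_i$ is either a point or a rational curve $\PP^1$.

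Next I would observe that in either of the remaining cases $\Db(Y_i)$ carries a full exceptional collection, and every admissible subcategory of it is generated by exceptional objects (for $\Db(\PP^1) = \langle \ko, \ko(1) \rangle$ this follows from the classification of its admissible subcategories). Refining the decomposition accordingly, $\Db(X) = \langle E_1, \ldots, E_N \rangle$ becomes a full exceptional collection, so $X$ is in fact categorically representable in dimension $0$. By Remark \ref{prop-categorical-repr-in-dim-0} this forces $K_0(X) = \ZZ^N$, i.e. $K_0(X)$ is free, contradicting the assumption that $K_0(X)$ is not locally free. This contradiction proves the corollary.

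The only genuinely technical point, and the one I expect to be the main obstacle, is the reduction of the $\PP^1$-pieces to exceptional objects. One can sidestep it entirely, however, by computing $K_0$ directly: the decomposition gives $K_0(X) = \bigoplus_i K_0(\cat{A}_i)$, where each $K_0(\cat{A}_i)$ is a direct summand of the finitely generated free group $K_0(Y_i)$ (namely $\ZZ$ for a point and $\ZZ^2$ for $\PP^1$, the positive genus case having been excluded). A direct summand of a finitely generated free abelian group is free, so $K_0(X)$ is free, and the same contradiction with the hypothesis that $K_0(X)$ is not locally free arises. Either route completes the argument, the first also yielding the stronger conclusion that such an $X$ would be representable in dimension $0$.
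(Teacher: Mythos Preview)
Your proof is correct and is exactly the argument the paper intends: the corollary is stated without proof, immediately after Proposition~\ref{prop-surf-p=q=0}, and your route---exclude positive-genus curves via that proposition, then note that the remaining pieces (points and $\PP^1$) force $K_0(X)$ to be free---is the implicit one. The direct $K_0$-summand computation you give at the end is the cleanest way to finish, since it sidesteps any appeal to a classification of admissible subcategories of $\Db(\PP^1)$.
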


\begin{remark}\bf(Surfaces with $p_g=q=0$ not categorically representable in positive codimension). \rm
Proposition \ref{prop-categorical-repr-in-dim-0} could be an interesting tool in the study of derived categories of surfaces
with $p_g=q=0$: notice that many of them have torsion in $H_1(X, \ZZ)$ (for an exhaustive treatment and referencing, see
\cite{bauer-cata-pigna-survey}). Anyway the discreteness of the motive is a rather strong assumption, which for
example implies the Bloch conjecture. There are few cases where this is known.
\begin{itemize}
 \item[1)] $X$ is an Enriques surface \cite{coombes}. 
\item[2)] $X$ is a Godeaux surface obtained as a quotient of a quintic by an action of $\ZZ/5\ZZ$ \cite{gul-pedr-godeaux}.
\end{itemize}
\end{remark}

These observations lead to state some deep question about categorical representability of
surfaces.

\begin{question}\label{exc-obj-on-surf}
Let $X$ be a smooth projective surface with $p_g = q = 0$. 
\begin{itemize}
\item[1)] Does $\Db(X)$ admit an exceptional sequence?

\item[2)] Is the exceptional sequence full? That is, is $X$ representable in codimension 2?

\item[3)] If $X$ is representable in codimension 2, is $X$ rational?
\end{itemize}
\end{question}

\subsection{Threefolds}\label{threefolds}

Remark that there are examples
of smooth projective non-rational threefolds $X$ which are categorically representable in codimension 2:
just consider a rank three vector bundle $\ke$ on a curve $C$ of positive genus and take $X:=\PP(\ke)$.
In \cite[Sect. 6.3]{marcello-bolo-conicbd} we provide a conic bundle example. Anyway, Corollary
\ref{ortjac} somehow suggests that categorical representability in codimension 2 should be a necessary condition
for rationality.

A reasonable idea is to restrict our
attention to minimal threefolds with $\kappa_X <0$ (recall that this is a necessary condition for rationality),
in particular to the ones we expect to satisfy assumption $\natural$, in order to have interesting information about the intermediate Jacobian from semiorthogonal decompositions.
The three big families of such threefolds are:
Fano threefolds, conic bundles over rational surfaces and del Pezzo fibrations over $\PP^1$.
Remark \ref{list-of-iff} gives a list of rational threefolds which are categorically representable in codimension 2,
and Remark \ref{remark-list-of-3folds} a list of families whose generic term is non-rational and cannot be categorically
representable in codimension $> 1$.

\begin{question}\label{question-on-3folds}
Let $X$ be a smooth projective threefold with $\kappa_X <0$.

\begin{itemize}
\item[1)] If $X$ is rational, is $X$ categorically representable in codimension 2?

\item[2)] Is $X$ categorically representable in codimension 2 if and only if $X$ is rational? 
\end{itemize}
\end{question}

A positive answer to the second question is provided for standard conic bundles over minimal surfaces \cite{marcello-bolo-conicbd},
but it seems to be quite a strong fact to hold in general: recall that having a splitting $J(X) \simeq \oplus J(\Gamma_i)$ is only a necessary
condition for rationality, and Corollary \ref{ortjac} shows that if $X$ satisfies $\natural$, categorical representability
in codimension 2 would give the splitting of the Jacobian.
 
Remark \ref{list-of-iff} provides a large list of rational threefolds categorically representable in codimension 2.
Is it possible to add examples to this list? In particular in the case of Del Pezzo fibrations over $\PP^1$
only the quadric and the degree 4 fibration are described.

A good way to understand these questions is by studying some special rational or non-rational (that is non generic in their
family) threefold. This forces to consider non smooth ones, but we can use Kuznetsov's theory of categorical resolution of singularities \cite{kuznet-singul}
and study the categorical resolution of $\Db(X)$, as we pointed out in Remark \ref{rem-def-for-non-smooth}.
For example, let $X \subset \PP^4$ be nodal cubic threefold with a double point, which is known to be rational.
\begin{proposition}
Let $X \subset \PP^4$ be a cubic threefold with a double point and $\widetilde{X} \to X$ the blow-up of the singular point. The categorical
resolution of singularities $\widetilde{\cat{D}} \subset \Db(\widetilde{X})$ of $\Db(X)$ is representable in codimension two.
Indeed there is a semiorthogonal decomposition
$$\widetilde{\cat{D}} = \langle \Db(\Gamma), E \rangle,$$
where $E$ is an exceptional object and $\Gamma$ a complete intersection of a quadric and a cubic in $\PP^3$.
 \end{proposition}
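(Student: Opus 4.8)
The plan is to exploit the classical rationality construction for a nodal cubic threefold --- projection from the node --- in order to realize the resolution $\tX \to X$ as a blow-up of $\PP^3$, and then to read off the categorical resolution through Orlov's blow-up formula together with Kuznetsov's description of the categorical resolution attached to an ordinary double point.

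\emph{Geometric setup.} First I would choose coordinates so that the double point is $p=[0:0:0:0:1]$; the cubic then has equation $F = x_4 F_2(x_0,\dots,x_3) + F_3(x_0,\dots,x_3) = 0$, with $F_2$ a quadric and $F_3$ a cubic in $x_0,\dots,x_3$, and $p$ an ordinary double point precisely when $F_2$ is nondegenerate. Projection from $p$ gives a rational map $X \dashrightarrow \PP^3$ resolved by the blow-up $f\colon \tX \to X$ of the node; restricting $F$ to the line joining $p$ to a point of $\PP^3$ one sees that the resolved projection $\pi\colon \tX \to \PP^3$ is birational, with positive-dimensional fibres exactly over the locus where $F_2$ and $F_3$ vanish simultaneously. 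Hence, assuming (as is generic for a cubic with a single node) that $\Gamma := \{F_2 = F_3 = 0\} \subset \PP^3$ is smooth, one identifies $\pi$ with the blow-up of $\PP^3$ along $\Gamma$, so that $\tX \iso \mathrm{Bl}_\Gamma \PP^3$. By adjunction $\Gamma$ is a smooth connected curve of genus $4$, cut out by a quadric and a cubic --- the complete intersection claimed in the statement.

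\emph{Blow-up formula.} Since $\Gamma$ has codimension $2$ in $\PP^3$, Proposition \ref{blow-up-formula} together with Beilinson's full exceptional sequence for $\Db(\PP^3)$ yields
$$\Db(\tX) = \langle \Db(\Gamma), \ko, \ko(1), \ko(2), \ko(3) \rangle,$$
where $\ko(i)$ denotes the pullback $\pi^*\ko_{\PP^3}(i)$; this is a semiorthogonal decomposition of the whole of $\Db(\tX)$ into the curve factor and four exceptional objects.

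\emph{Extracting the categorical resolution.} The remaining, and main, step is to locate Kuznetsov's categorical resolution $\widetilde{\cat{D}} \subset \Db(\tX)$ of $\Db(X)$ inside this decomposition. The exceptional divisor of the node blow-up $f$ is the quadric surface $Q = \{F_2 = 0\} \iso \PP^1 \times \PP^1$ (the projectivized tangent cone), with normal bundle $\ko_Q(-1,-1)$; since $f$ is not crepant, the construction of \cite{kuznet-singul} realizes $\widetilde{\cat{D}}$ as the right orthogonal in $\Db(\tX)$ to a subcategory generated by objects supported on $Q$, obtained from a Lefschetz decomposition of $\Db(Q)$ relative to the conormal polarization $\ko_Q(1,1)$. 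A direct Ext-computation (using $Li^*i_*\ko_Q \iso \ko_Q \oplus \ko_Q(1,1)[1]$, together with the vanishing of the cohomology of $\ko_Q(-1,-1)$) shows that the relevant pushforwards $i_*\ko_Q(a,b)$ are exceptional in $\Db(\tX)$ and mutually semiorthogonal, and I would then check that exactly three of them are cut off, leaving $\widetilde{\cat{D}}$ with a single surviving exceptional object $E$ and with the factor $\Db(\Gamma)$ intact. This gives $\widetilde{\cat{D}} = \langle \Db(\Gamma), E \rangle$, hence representability in codimension $2$.

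\emph{Main obstacle.} The hard part will be precisely this last identification: pinning down which admissible subcategory of $\Db(\tX)$ is Kuznetsov's $\widetilde{\cat{D}}$ --- that is, making the Lefschetz decomposition of $\Db(Q)$ and the resulting glueing fully explicit --- and then matching it, through the change of birational model, with the Orlov decomposition coming from $\mathrm{Bl}_\Gamma \PP^3$. The semiorthogonality bookkeeping between the $Q$-supported objects and the factors $\ko(i)$ and $\Db(\Gamma)$ is delicate, because the node blow-up $f$ and the blow-up $\pi$ along $\Gamma$ see genuinely different exceptional divisors; the mutations relating the two pictures are where all the real content lies.
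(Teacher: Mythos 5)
Your overall route is the same as the paper's: resolve the projection from the node to exhibit $\widetilde{X}$ as the blow-up of $\PP^3$ along the genus-$4$ curve $\Gamma=\{F_2=F_3=0\}$, apply Orlov's blow-up formula, and then locate Kuznetsov's categorical resolution inside that decomposition by comparing with the Lefschetz decomposition of the exceptional quadric $Q$. The paper carries out the step you flag as the main obstacle by writing down both decompositions explicitly in the Picard lattice ($h=H-Q$, $D=2H-3Q$), mutating the exceptional objects $-3h$ and $-2h$ to the left past $\Phi\Db(\Gamma)$, and then matching the resulting block $\cat{B}=\langle \Phi\Db(\Gamma),-h\rangle$ with the resolved category $\widetilde{\cat{A}}_X$ by the same chain of mutations as in Kuznetsov's treatment of the nodal cubic fourfold; so your plan is viable, but all the content is indeed in that comparison and you have not supplied it.

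One concrete point in your sketch would fail as stated: in Kuznetsov's construction for a threefold ordinary double point the Lefschetz decomposition of $\Db(Q)$ with respect to the conormal polarization is $\langle \ka_1(-h),\ka_0\rangle$ with $\ka_1=\langle\ko_Q\rangle$, so the orthogonal complement of $\widetilde{\cat{D}}$ in $\Db(\widetilde{X})$ is generated by the \emph{single} exceptional object $\alpha_*\ko_Q(-h)$, not by three $Q$-supported objects. Hence only one of the four exceptional objects of the Orlov decomposition is cut off, and $\widetilde{\cat{D}}$ retains $\Db(\Gamma)$ together with \emph{three} exceptional objects (in the paper's proof, $\widetilde{\cat{D}}=\langle\widetilde{\cat{A}}_X,\ko_{\widetilde{X}},H\rangle$ with $\widetilde{\cat{A}}_X\simeq\langle\Db(\Gamma),E\rangle$); a $K_0$ count confirms this. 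The decomposition with a single exceptional object is the one for $\widetilde{\cat{A}}_X$, the resolution of the complement of $\{\ko_X,\ko_X(1)\}$, and either reading gives representability in codimension $2$; but if you insist on cutting off three objects you will not land on Kuznetsov's $\widetilde{\cat{D}}$.
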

\begin{proof}
This is shown following step by step \cite[Section 5]{kuznetcubicfourfold}, where the four dimensional case is studied.
Let us give a sketch of the proof. Let $P$ be the singular point of $X$, and $\sigma: \widetilde{X} \to X$ its blow-up.
The exceptional locus $\alpha: Q \hookrightarrow \widetilde{X}$ of $\sigma$ is a quadric surface.
The projection of $\PP^4$ to $\PP^3$ from the point $P$ restricted to $X$ gives the birational map $X \dashrightarrow \PP^3$.
The induced map $\pi: \widetilde{X} \to \PP^3$ is the blow-up of a smooth curve $\Gamma$ of genus 4, given by the complete intersection
of a cubic and a quadric surface.
If we denote $h:= \pi^* \ko_{\PP^3}(1)$ and $H:= \sigma^* \ko_X (1)$, we have that $Q=2h-D$, $H=3h-D$, then $h=H-Q$ and $D=2H-3Q$
as in \cite[Lemma 5.1]{kuznetcubicfourfold}. The canonical bundle $\omega_{\widetilde{X}} = -4h+D= -2H+Q$ can be calculated
via the blow-up $\pi$.

The same arguments as in \cite{kuznetcubicfourfold} give the decomposition
$$\Db(\widetilde{X}) = \langle \alpha_* \ko_Q (-h), \widetilde{\cat{D}} \rangle.$$
Indeed the conormal bundle of $Q$ is $\ko_Q(h)$ and the Lefschetz decomposition with respect to it is:
$$\langle \ka_1(-h),\ka_0\rangle,$$
where $\ka_1 = \langle\ko_Q\rangle$ and $\ka_0 = \langle \ko_Q, S_1, S_2 \rangle$, with $S_1$ and $S_2$ the two spinor bundles.
We obtain then the semiorthogonal decomposition:
\begin{equation}\label{semiorth1}
\Db(\widetilde{X}) = \langle \alpha_* \ko_Q (-h), \widetilde{\cat{A}}_X, \ko_{\widetilde{X}}, H \rangle,
\end{equation}
where $\widetilde{\cat{A}}_X$ is the categorical resolution of $\cat{A}_X$, as in \cite[Lemma 5.8]{kuznetcubicfourfold}.
The representability of $\widetilde{\cat{D}}$ relies then on the representability of $\widetilde{\cat{A}}_X$.

On the other side, applying the blow-up formula (see Prop. \ref{blow-up-formula}) to $\pi: \widetilde{X} \to \PP^3$,
and choosing $\{ \ko_{\PP^3}(-3), \ldots, \ko_{\PP^3} \}$ as full exceptional sequence for $\Db(\PP^3)$, we obtain:
$$\Db(\widetilde{X}) = \langle \Phi \Db(\Gamma), -3h,-2h-h,\ko_{\widetilde{X}} \rangle,$$
where $\Phi: \Db(\Gamma) \to \Db(\widetilde{X})$ is fully faithful. Now as in \cite[Lemma 5.3]{kuznetcubicfourfold},
if we mutate $-3h$ and $-2h$ to the left with respect to $\Phi \Db(\Gamma)$, we get
\begin{equation}\label{semiorth2}
\Db(\widetilde{X}) = \langle -3h+D, -2h+D, \cat{B}, \ko_{\widetilde{X}} \rangle,
\end{equation}
where $\cat{B} = \langle \Phi \Db(\Gamma), -h \rangle$ is an admissible subcategory of $\Db(\widetilde{X})$.
Finally, one can show that $\cat{B}$ and $\widetilde{\cat{A}}_X$ are equivalent,
following exactly the same path of mutations as in \cite[Sect. 5]{kuznetcubicfourfold} to compare the
decompositions (\ref{semiorth1}) and (\ref{semiorth2}). Notice that one can calculate explicitely the
exceptional object $E$ by following the mutations of $-h$.
\end{proof}

Another special very interesting example is described in \cite{kuzne-ingalls}: a singular double solid $X \to \PP^3$
ramified along a quartic symmetroid. This threefold is non-rational thanks to \cite{artin-mumford}, because $H^3(X, \ZZ)$
has torsion. A rough account
(skipping the details about the resolution of singularities) of Ingalls and Kuznetsov's result is the following:
if $X'$ is the small resolution of $X$, there is an Enriques surface $S$ and a semiorthogonal decomposition
\begin{equation}\label{decomp-double-solid}
\Db(X') = \langle \cat{A}_S, E_1,E_2 \rangle,
\end{equation}
where $E_i$ are exceptional objects and $\cat{A}_S$ is the orthogonal complement in $\Db(S)$ of 10 exceptional vector bundles on $S$
(\cite{zube-enriques}). Then we can apply to this set Corollary  \ref{cor-surf-non-rep}.
\begin{corollary}
The threefold $X'$ is not categorically representable in codimension $>1$.
\end{corollary}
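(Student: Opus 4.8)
The plan is to carry over to the threefold $X'$, through the decomposition (\ref{decomp-double-solid}), the two obstructions that lie behind Corollary \ref{cor-surf-non-rep} for the Enriques surface $S$: the discreteness of $h(S)$ (\cite{coombes}) and the fact that $K_0(S)$ is not locally free, the canonical class giving a nontrivial $2$-torsion element of $\Pic(S)$. Since distinct semiorthogonal decompositions of a triangulated category cannot be compared directly, I would not try to deduce non-representability of $X'$ from that of $S$ at the categorical level; instead I would work with the invariants $\CH^*_{\QQ}$ and $K_0$, which do not depend on the chosen decomposition. Denote by $S_1,\ldots,S_{10}$ the exceptional bundles of \cite{zube-enriques}, so that $\Db(S) = \langle \cat{A}_S, S_1,\ldots,S_{10}\rangle$.

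First I would rule out derived categories of curves of positive genus. Additivity of rational Chow groups along semiorthogonal decompositions (Remark \ref{rem-grr-decomp}) realizes $\CH^*_{\QQ}(\cat{A}_S)$ as a direct summand of $\CH^*_{\QQ}(S)$ and gives, from (\ref{decomp-double-solid}), $\CH^*_{\QQ}(X') = \CH^*_{\QQ}(\cat{A}_S)\oplus\QQ^2$. As $h(S)$ is discrete we have $A^i_{\QQ}(S) = 0$ for every $i$, whence the algebraically trivial part of the summand $\CH^*_{\QQ}(\cat{A}_S)$ vanishes and therefore $A^i_{\QQ}(X') = 0$ for all $i$. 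By Theorem \ref{theo-gor-gul} this forces the Chow--K\"unneth decomposition of $h(X')$ to be discrete, with $J(X')$ rationally trivial, so that $X'$ satisfies $(\star)$. Consequently, by Theorem \ref{from-Db-to-isogeny}, a fully faithful functor $\Db(\Gamma)\to\Db(X')$ with $g(\Gamma)>0$ would give an isogeny of $J(\Gamma)$ onto an abelian subvariety of $J(X')=0$, which is impossible; equivalently, the argument of Proposition \ref{prop-surf-p=q=0} shows that $\id_{h^1(\Gamma)}$ cannot factor through the discrete motive $h(X')$. Hence no curve of positive genus can occur in any semiorthogonal decomposition of $\Db(X')$.

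Next I would rule out representability in dimension $0$. Additivity of $K_0$, a purely categorical fact, gives $K_0(S) = K_0(\cat{A}_S)\oplus\ZZ^{10}$ and $K_0(X') = K_0(\cat{A}_S)\oplus\ZZ^2$, so that $K_0(X')$ and $K_0(S)$ have the same torsion subgroup. Since the $2$-torsion of $\Pic(S)$ survives in $K_0(S)$, the group $K_0(X')$ is not free; but a full exceptional sequence would force $K_0(X')\cong\ZZ^l$. Thus $X'$ is not representable in dimension $0$. Combining the two steps, a representability of $X'$ in dimension $\leq 1$ would amount to a decomposition by exceptional objects and derived categories of smooth curves in which, by the first step, no curve of positive genus appears; it would therefore reduce to a full exceptional sequence, contradicting the second step. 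This proves that $X'$ is not categorically representable in codimension $>1$.

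The step I expect to be delicate is the very first one: the additivity of $\CH^*_{\QQ}$ and the vanishing of the algebraically trivial cycles attached to the admissible subcategory $\cat{A}_S$, which is not the derived category of a smooth variety. To justify it one uses that the projection of $\Db(S)$ onto $\cat{A}_S$ is of Fourier--Mukai type and applies Grothendieck--Riemann--Roch to its kernel, exactly as in the motivic discussion of Section \ref{sec-fff-and-mot}; this is precisely the rational form of Conjecture \ref{orlov-conj} for the inclusion $\cat{A}_S\hookrightarrow\Db(S)$, which is available here because $h(S)$ is discrete. The $K_0$-additivity used in the second step, by contrast, is unconditional, so the non-freeness of $K_0(X')$ is the robust ingredient of the argument.
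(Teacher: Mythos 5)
Your argument is correct and ultimately rests on the same two obstructions the paper uses --- the discreteness of $h(S)$ and the torsion in $K_0(S)$ --- imported into $\Db(X')$ through the common component $\cat{A}_S$ of (\ref{decomp-double-solid}); but you implement the transfer by a genuinely different route. The paper works entirely at the categorical level: Corollary \ref{cor-surf-non-rep} gives non-representability of $S$ in codimension $>0$, this is declared equivalent to non-representability of $\cat{A}_S$ in dimension $<2$, and the conclusion for $X'$ is read off from (\ref{decomp-double-solid}). That is shorter, but it leaves implicit the step ``if $\Db(X')$ is representable in dimension $\leq 1$ then so is its admissible subcategory $\cat{A}_S$'', which is precisely the kind of comparison of a priori unrelated semiorthogonal decompositions that the paper elsewhere warns cannot be performed in general. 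You sidestep this by transporting the obstructions through additive invariants: the unconditional $K_0$-additivity identifies the torsion of $K_0(X')$ with that of $K_0(S)$ and rules out any full exceptional sequence, while the Chow-theoretic step shows $h(X')$ is discrete and then the argument of Proposition \ref{prop-surf-p=q=0} (which works for any variety with discrete motive, not only surfaces) excludes curves of positive genus. What your version buys is a rigorous transfer; what it costs is the step you rightly flag as delicate, namely attaching rational Chow-theoretic data to the non-geometric category $\cat{A}_S$ via the Fourier--Mukai kernels of its projection functors and identifying the two realizations of $\cat{A}_S$ inside $\Db(S)$ and $\Db(X')$. It is worth noting that the $K_0$ half of your argument needs none of this machinery and already kills representability in dimension $0$, so the only place where the motivic input is genuinely indispensable is the exclusion of positive-genus curves.
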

\begin{proof}
Consider the Enriques surface $S$ and the semiorthogonal decomposition
$$\Db(S) = \langle \cat{A}_S, E_1, \ldots, E_{10} \rangle,$$
where $E_i$ are the exceptional vector bundles described in \cite{zube-enriques}. Then the non-representability
of $S$ in codimension $>0$ is equivalent to the non-representability of $\cat{A}_S$ in dimension $<2$.
The statement follows then from (\ref{decomp-double-solid}).
\end{proof}

Remark that the
lack of categorical representability of $X'$ (and presumably of $X$, thinking about the categorical resolution of
singularities) is due to the presence of torsion in $K_0(S)$ and in particular in $H_1(S,\ZZ)$, whereas the non-rationality
of $X$ is due to the presence of torsion in $H^3(X,\ZZ)$. The relation between torsion in $H^3(X,\ZZ)$ and categorical
representability needs a further investigation, for example in the case recently described in \cite{katza-iliev-pry-nonrational}.

\subsection{Noncommutative varieties}\label{noncommutative}

The previous speculations and partial results give rise to the hope of extending fruitfully the study of categorical representability
to higher dimensions and to the noncommutative setting. By the latter, we mean, following Kuznetsov
\cite[Sect. 2]{kuznetconicbundles}, an algebraic variety $Y$ with a sheaf $\kb$ of $\ko_Y$-algebras of finite type.
Very roughly, the corresponding noncommutative variety $\bar{Y}$ would have a category of coherent sheaves $\coh(\bar{Y}) = \coh(Y,
\kb)$ and a bounded derived category $\Db(\bar{Y}) = \Db(Y,\kb)$. The examples
which appear very naturally in our setting
are the cases where $\kb$ is an Azumaya algebra or the even part of the Clifford algebra associated to some quadratic form over $Y$.
Finally, if a triangulated category $\cat{A}$ has
Serre functor such that $S_{\cat{A}}^m = [n]$, for some integers $n$ and $m$, with $m$ minimal with this property,
we will call it a \it $\frac{n}{m}$-Calabi--Yau category. \rm
If $m=1$, these categories deserve the name of noncommutative Calabi--Yau $n$-folds, even if they are not a priori given
by the derived category of some Calabi--Yau $n$-fold with a sheaf of algebras.

If $S$ is any smooth projective variety, $X \to S$ a Brauer--Severi variety of relative dimension $r$, and $\ka$ the
associated Azumaya algebra in $\Br(S)$, then \cite{marcellobrauer}
$$\Db(X) = \langle \Db(S), \Db(S,\ka^{-1}), \ldots, \Db(S,\ka^{-r}) \rangle.$$
The categorical representability of $X$ would then
rely on the categorical representability of $(S,\ka)$, which is an interesting object in itself. For example,
if $Y$ is a generic cubic fourfold containing a plane, there are a K3 surface $S$ and an Azumaya algebra
$\ka$ such that the categorical representability of $(S,\ka)$ is the subject of Kuznetsov's Conjecture \cite{kuznetcubicfourfold} about the rationality of cubic fourfolds.

\smallskip

If $S$ is a smooth projective variety and $Q \to S$ a quadric fibration of relative dimension $r$, we
can consider the sheaf $\kb_0$ of the even parts of the Clifford algebra associated to the quadratic form defining
$Q$. There is a semiorthogonal decomposition:
$$\Db(Q) = \langle \Db(S,\kb_0), \Db(S)_1, \ldots, \Db(S)_{r-1}\rangle,$$
where $\Db(S)_i$ are equivalent to $\Db(S)$ \cite{kuznetconicbundles}. The categorical representability of $(S,\kb_0)$ should then 
be a very important tool in studying birational properties of $Q$. This is indeed the case for conic bundles over rational
surfaces \cite{marcello-bolo-conicbd}.

\smallskip

Finally, let $\cat{A}$ be an $\frac{n}{m}$-Calabi--Yau category. Such categories appear as orthogonal complements
of an exceptional sequence on Fano hypersurfaces in projective spaces \cite[Cor. 4.3]{kuznetv14}. It is then natural
to wonder about their representability. For example, if $X$ is a cubic or a quartic threefold,
it follows from Remark \ref{list-three-not-rep} that these orthogonal complements (which are, respectively, $\frac{5}{3}$ and
$\frac{10}{4}$-Calabi--Yau)
are not representable in dimension 1.

\begin{question}\label{question-cycat}
Let $\cat{A}$ be a $\frac{n}{m}$-Calabi--Yau category.
\begin{itemize}
\item[1)] Is $\cat{A}$ representable in some dimension?
\item[2)] If yes, is there an explicit lower bound for this dimension?
\item[3)] If $m=1$, is $\cat{A}$ representable in dimension $n$ if and only if there exist a smooth
$n$-dimensional variety $X$ and an equivalence $\Db(X) \simeq \cat{A}$?
\end{itemize}
\end{question}

\subsection{Higher dimensional varieties}\label{higherdim}
Unfortunately, it looks like the techniques used for threefolds in \cite{marcello-bolo-conicbd} hardly extend to
dimension bigger than $3$. The examples and supporting evidences provided so far lead anyway
to suppose that categorical representability can give useful informations on the birational properties of 
any projective variety. The main case is a challenging Conjecture by Kuznetsov \cite{kuznetcubicfourfold}. Let $X \subset \PP^5$ be
a smooth cubic fourfold, then there is a semiorthogonal decomposition
$$\Db(X) = \langle \cat{A}_X, \ko_X, \ko_X(1), \ko_X(2) \rangle.$$
The category $\cat{A}_X$ is 2-Calabi--Yau.
\begin{conjecture}\label{kuz-conj}\bf(Kuznetsov). \rm
The cubic fourfold $X$ is rational if and only if $\cat{A}_X \simeq \Db(Y)$ for a smooth projective K3 surface $Y$.
\end{conjecture}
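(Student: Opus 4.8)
The plan is to treat the two implications separately, reducing each to the interplay between the $2$-Calabi--Yau category $\cat{A}_X$ and the Hodge theory of $X$, and to isolate precisely where rationality enters.

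For the implication $\cat{A}_X \simeq \Db(Y) \Rightarrow X$ rational, I would first translate the categorical hypothesis into a Hodge-theoretic one. The equivalence transports the natural weight-two Hodge structure on the Mukai lattice of $\cat{A}_X$ to that of a K3 surface $Y$, which forces $X$ to be special in the sense of Hassett, carrying an associated K3 surface of admissible discriminant. The genuinely hard step is then to manufacture a birational map $X \dashrightarrow \PP^4$ out of $Y$. I would try to mimic the classical pfaffian case (discriminant $14$), where the associated K3 governs Fano's construction: the surface $Y$, together with its induced Brauer class, should parametrise a family of rational surfaces sweeping out $X$, and one would attempt to show that a suitable such family yields a birational parametrisation $X \dashrightarrow \PP^4$.

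For the converse $X$ rational $\Rightarrow \cat{A}_X \simeq \Db(Y)$, I would exploit weak factorisation: a birational map between $X$ and $\PP^4$ is resolved by a chain of blow-ups and blow-downs along smooth centres of dimension $\le 2$. Applying Orlov's blow-up formula (Proposition \ref{blow-up-formula}) repeatedly, each smooth blow-up of a fourfold contributes only exceptional objects and copies of $\Db(\Gamma)$ and $\Db(T)$ for the curves $\Gamma$ and surfaces $T$ among the centres. The strategy is to track the maximal $2$-Calabi--Yau summand of $\Db(X)$ --- its Clemens--Griffiths component --- along this chain and to argue that the only surface categories capable of contributing such a piece, with the numerical invariants of $\cat{A}_X$, are those of K3 type; this would identify $\cat{A}_X$ with $\Db(Y)$.

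The main obstacle, sharpest in the converse direction, is that semiorthogonal decompositions behave well under blow-\emph{ups} but not under blow-\emph{downs}: as remarked after Proposition \ref{blow-up-formula}, triangulated categories carry too little structure to compare two decompositions related by a general chain of mutations, so no descent for the $2$-Calabi--Yau component is automatic. I therefore expect the crux of any proof to be a birational-invariance statement for the Clemens--Griffiths component of $\Db(X)$ --- a fourfold analogue of the splitting of the intermediate Jacobian used in Corollary \ref{ortjac} --- which would control this component throughout the factorisation and pin it to K3 type exactly when $X$ is rational.
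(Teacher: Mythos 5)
The statement you have been asked to prove is stated in the paper as a \emph{conjecture}, attributed to Kuznetsov, and the paper offers no proof of it: it only records that the conjecture has been verified for singular cubics, pfaffian cubics and Hassett's examples, with reference to \cite{kuznetcubicfourfold}. Your proposal is an honest strategy outline, but it does not close either implication; in both directions the step you yourself flag as hard is exactly the open problem that makes this a conjecture rather than a theorem.

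Concretely: in the direction $\cat{A}_X \simeq \Db(Y) \Rightarrow X$ rational, translating the categorical hypothesis into the Hodge-theoretic one (that $X$ is special in Hassett's sense with an associated K3 of admissible discriminant) is a reasonable first move, but there is no known general mechanism that converts an associated K3 surface into a birational parametrisation $X \dashrightarrow \PP^4$; the pfaffian construction is special to discriminant $14$, and ``attempting to show that a suitable family yields a birational parametrisation'' restates the problem rather than solving it. In the converse direction, your weak-factorisation argument founders exactly where you say it does: semiorthogonal decompositions do not descend along blow-downs, and the ``maximal $2$-Calabi--Yau summand'' you want to track --- the Clemens--Griffiths component --- is not known to be well defined, let alone a birational invariant; the paper itself only says it ``seems reasonable to expect'' that categorical representability in codimension $2$ controls it. Nor do you supply an argument that the only surface categories capable of contributing a $2$-Calabi--Yau piece with the numerical invariants of $\cat{A}_X$ are of K3 type. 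Both gaps are genuine, and each is, as far as the paper (and the current literature it cites) is concerned, open.
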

This conjecture has been verified in \cite{kuznetcubicfourfold} for singular cubics, pfaffian cubics and
Hassett's \cite{hassett-special} examples.
When $X$ contains a plane $P$
there is a way more explicit construction: blowing up $P$ we obtain a quadric bundle $\widetilde{X} \to \PP^2$ of
relative dimension 2, degenerating along a sextic. If the sextic is smooth, let $S \to \PP^2$ be the double
cover, which is a K3 surface. Then
$$\cat{A}_X \simeq \Db(\PP^2,\kb_0) \simeq \Db(S,\ka),$$
where $\kb_0$ is associated to the quadric fibration and $\ka$ is an Azumaya algebra, obtained lifting $\kb_0$
to $S$. The questions about categorical
representability of noncommutative varieties arise then very naturally in this context.
Notice that if $\cat{A}_X$ is representable in dimension 2, then we know something weaker than Kuznetsov conjecture:
we would have a smooth projective surface $S'$ and a fully faithful functor $\cat{A}_X \to \Db(S')$. Point
3) of Question \ref{question-cycat} appears naturally in this context.

\begin{question}\label{quest-kuz-conj}
One can then wonder if the Kuzentsov conjecture may be stated in the following form: the cubic fourfold $X$ is rational if and only if
it is categorically representable in codimension 2. This is equivalent to proving that the 2-Calabi--Yau category $\cat{A}_X$
is representable in dimension 2 if and only if there exist a K3 surface $Y$ and an equivalence $\Db(Y) \simeq \cat{A}_X$. 
\end{question}

We can propose some more examples of fourfolds for which a Kuznetsov-type conjecture seems natural: if $X$ is the complete intersection
of three quadrics $Q_1$, $Q_2$, $Q_3$ in $\PP^7$, then Homological Projective Duality (\cite{kuznetHPD,kuznetconicbundles}) gives an exceptional
sequence on $X$ and its complement $\cat{A}_X \simeq \Db(\PP^2,\kb_0)$, where $\kb_0$ is the even Clifford algebra associated to the net
of quadrics generated by $Q_1,Q_2,Q_3$. Similarly, if we consider two quadric fibrations $Q_1, Q_2 \to \PP^1$ of relative dimension
$4$ and their complete intersection $X$, there is an exceptional sequence
on $X$, and let $\cat{A}_X$ be its orthogonal complement. A realtive version of Homological Projective Duality shows that $\cat{A}_X$
equivalent to $\Db(S,\kb_0)$, where $S$ is a $\PP^1$-bundle over $\PP^1$ and $\kb_0$ the
even Clifford algebra associated to the net of quadrics generated by $Q_1$ and $Q_2$. It is natural to wonder if representability in dimension
2 of the noncommutative varieties is equivalent or is a necessary condition for rationality of $X$. A partial answer to the last
example will be provided in a forthcoming paper \cite{asher-marcello-bolo}.

Other examples in dimension 7 are provided in \cite{iliev-manivel-cubic-7-8folds}.
If $X$ is a cubic sevenfold, there is a distinguished subcategory $\cat{A}_X$ of $\Db(X)$,
namely the orthogonal complement of the exceptional sequence $\{\ko_X, \ldots, \ko_X(5)\}$. This
is a 3-Calabi--Yau category. If $X$ is Pfaffian, it is shown in \cite{iliev-manivel-cubic-7-8folds}
that $\cat{A}_X$ cannot be equivalent to the derived category of any smooth projective variety.
It is also conjectured that $\cat{A}_X$ is equivalent to the orthogonal complement of an exceptional
sequence in the derived category $\Db(Y)$ of a Fano sevenfold $Y$ of index 3,
birationally equivalent to $X$. 

\subsection{Other approaches}\label{approaches}

Of course categorical representability is just one among different approaches to the study
of birational geometry of a variety via derived categories. Nevertheless there is some
common ground.

\smallskip

First of all, Kuznetsov mentions in \cite{kuznetcubicfourfold} the notion of Clemens--Griffiths component of $\Db(X)$, whose
vanishing would be a necessary condition for rationality. It seems reasonable to expect that categorical representability
in codimension 2 implies the vanishing of the Clemens--Griffiths component.

Another recent theory is based on
Orlov spectra and their gaps \cite{katza-favero-ballard-generation-time}. Let us refrain even to sketch a definition of it,
but just notice that \cite[Conj. 2]{katza-favero-ballard-generation-time} draws a link between categorical
representability and gaps in the Orlov spectrum (see, in particular \cite[Cor. 1.11]{katza-favero-ballard-generation-time}).
Finally, conjectures based on homological mirror symmetry are proposed in
\cite{katza-rat-hms-1,katza-rat-hms-2}, but we cannot state a precise relation with our construction. A careful study
of the example constructed in \cite{katza-iliev-pry-nonrational} would be a good starting point.

\bibliographystyle{amsalpha}\bibliography{references}

\def\cprime{$'$}
\providecommand{\bysame}{\leavevmode\hbox to3em{\hrulefill}\thinspace}
\providecommand{\MR}{\relax\ifhmode\unskip\space\fi MR }
\providecommand{\MRhref}[2]{%
  \href{http://www.ams.org/mathscinet-getitem?mr=#1}{#2}
}
\providecommand{\href}[2]{#2}
\begin{thebibliography}{BMMS09}

\bibitem[ABB]{asher-marcello-bolo}
A.~Auel, M.~Bernardara, and M.~Bolognesi, \emph{Intersections of quadric
  fibrations}, In preparation.

\bibitem[Ale87]{aleks-dp4}
V.A. Alekseev, \emph{Rationality conditions for three-dimensional varieties
  with sheaf of del pezzo surfaces of degree 4}, Math. Notes \textbf{41}
  (1987), no.~5, 408--411.

\bibitem[AM72]{artin-mumford}
M.~Artin and D.~Mumford, \emph{Some elementary examples of unirational
  varieties which are not rational}, Proc. London math. soc. \textbf{3} (1972),
  no.~25, 75--95.

\bibitem[BB10]{marcello-bolo-conicbd}
M.~Bernardara and M.~Bolognesi, \emph{Derived categories and rationality of
  conic bundles}, ArXiv:1010.2417, 2010.

\bibitem[BCP10]{bauer-cata-pigna-survey}
I.~Bauer, F.~Catanese, and R.~Pignatelli, \emph{Surfaces of general type with
  geometric genus zero: a survey}, Arxiv:1004.2583, 2010.

\bibitem[Bea77]{beauvilleprym}
A.~Beauville, \emph{Vari\'et\'es de {P}rym et jacobiennes interm\'ediaires},
  Ann. scient. ENS \textbf{10} (1977), 309--391.

\bibitem[Bea11]{bovesym}
\bysame, \emph{Non-rationality of the symmetric sextic {F}ano treefold},
  arXiv:1102.1255.

\bibitem[Bei84]{beilinson}
A.~A. Beilinson, \emph{The derived category of coherent sheaves on
  $\mathbb{P}^n$}, Sel. Math. Sov. \textbf{34} (1984), no.~3, 233--237.

\bibitem[Bel85]{beltrachow}
M.~Beltrametti, \emph{On the {C}how group and the intermediate jacobian of a
  conic bundle}, Annali di Mat. pura e applicata \textbf{141} (1985), no.~4,
  331--351.

\bibitem[Ber07]{marcellocurves}
M.~Bernardara, \emph{Fourier--{M}ukai transforms of curves and principal
  polarizations}, C. R. Acad. Sci. Paris, Ser. I \textbf{345} (2007), 203--208.

\bibitem[Ber09]{marcellobrauer}
\bysame, \emph{A semiorthogonal decomposition for {B}rauer {S}everi schemes},
  Math. Nachr. \textbf{282} (2009), no.~10, 1406--13.

\bibitem[BFK10]{katza-favero-ballard-generation-time}
M.~Ballard, D.~Favero, and L.~Katzarkov, \emph{Orlov spectra: bounds and gaps},
  arXiv:1012.0864, 2010.

\bibitem[BK90]{bondalkap}
A.~I. Bondal and M.~M. Kapranov, \emph{Representable functors, {S}erre functors
  and mutations}, Math. USSR Izv. \textbf{35} (1990), no.~3, 519--541.

\bibitem[BM79]{blochmurreFano}
S.~Bloch and J.~P. Murre, \emph{On the {C}how group of certain types of {F}ano
  threefolds}, Compositio Math. \textbf{39} (1979), no.~1, 47--105.

\bibitem[BMMS09]{noicubic}
M.~Bernardara, E.~Macr\`{i}, S.~Mehrotra, and P.~Stellari, \emph{A categorical
  invariant for cubic threefolds}, arXiv:0903.4414, 2009.

\bibitem[BO95]{bondalorlov}
A.~I. Bondal and D.~O. Orlov, \emph{Semiorthogonal decomposition for algebraic
  varieties}, Math. AG/9506012, 1995.

\bibitem[Bon90]{bondal}
A.~I. Bondal, \emph{Representations of associative algebras and coherent
  sheaves}, Math. USSR Izv. \textbf{34} (1990), no.~1, 23--42.

\bibitem[Bri99]{bridg-equiv-and-FM}
T~Bridgeland, \emph{Equivalences of triangulated categories and
  {F}ourier--{M}ukai transforms}, Bull. London Math. Soc. \textbf{31} (1999),
  no.~1, 25--34.

\bibitem[CG72]{clemensgriffiths}
C.~H. Clemens and P.~Griffiths, \emph{The intermediate {J}acobian of the cubic
  threefold}, Ann. Math. \textbf{95} (1972), 281--356.

\bibitem[Coo92]{coombes}
K.~Coombes, \emph{The {K}-cohomology of {E}nriques surfaces}, Contemp. Math.
  \textbf{126} (1992), 47--57.

\bibitem[CV86]{ceresaverra}
G.~Ceresa and A.~Verra, \emph{The {A}bel-{J}acobi isomorphism for the sextic
  double solid}, Pacific J. Math. \textbf{124} (1986), no.~1, 85--105.

\bibitem[GG08]{gorch-gul-motives-and-repr}
S.~Gorchinskiy and V.~Guletskii, \emph{Motives and representability of
  algebraic cycles on threefolds over a field}, arXiv:0806.0173v2, 2008.

\bibitem[GP02]{gul-pedr-godeaux}
V.~Guletskii and C.~Pedrini, \emph{The {C}how motive of the godeaux surface},
  Algebraic Geometry, De Gruyter, Berlin, 2002, pp.~179--195.

\bibitem[Has00]{hassett-special}
B.~Hassett, \emph{Special cubic fourfolds}, Compositio Math. \textbf{120}
  (2000), 1--23.

\bibitem[IK10]{kuzne-ingalls}
C.~Ingalls and A.~Kuznetsov, \emph{On nodal {E}nriques surfaces and quartic
  double solids}, Arxiv:1012.3530, 2010.

\bibitem[IKP11]{katza-iliev-pry-nonrational}
A.~Iliev, L.~Katzarkov, and V.~Przyjalkowski, \emph{Double solids, categories
  and non-rationality}, arXiv:1102.2130, 2011.

\bibitem[Ili94]{iliev10}
A.~Iliev, \emph{The {F}ano surface of the {G}ushel\cprime\ threefold},
  Compositio Math. \textbf{94} (1994), no.~1, 81--107.

\bibitem[Ili03]{ilievV16}
\bysame, \emph{The {${\rm Sp}_3$}-{G}rassmannian and duality for prime {F}ano
  threefolds of genus 9}, Manuscripta Math. \textbf{112} (2003), no.~1, 29--53.

\bibitem[IM00]{iliev-marku-v14}
A.~Iiev and D.~Markushevich, \emph{The abel-jacobi map for cubic threefold and
  periods of fano threefolds of degree 14}, Doc. Math. \textbf{DMV 5} (2000),
  23--47.

\bibitem[IM04]{ilievmarkuV12}
\bysame, \emph{Elliptic curves and rank-2 vector bundles on the prime {F}ano
  threefold of genus 7}, Adv. Geom. \textbf{4} (2004), no.~3, 287--318.

\bibitem[IM07]{ilievamanovella}
A.~Iliev and L.~Manivel, \emph{Prime {F}ano threefolds and integrable systems},
  Math. Ann. \textbf{339} (2007), no.~4, 937--955.

\bibitem[IM11]{iliev-manivel-cubic-7-8folds}
\bysame, \emph{On cubic hypersurfaces of dimension seven and eight},
  arXiv:1102.3618, 2011.

\bibitem[IP99]{isko-prok-fano}
V.A. Iskovskikh and Yu.G. Prokhorov, \emph{Fano varieties}, Algebraic geometry.
  V.,volume 47 of Encyclopaedia Math. Sci., Springer, Berlin, 1999.

\bibitem[Kan83]{kanevdp1}
V.~I. Kanev, \emph{Intermediate {J}acobians of threefolds with a pencil of
  {D}el {P}ezzo surfaces and generalized {P}rym varieties}, C. R. Acad. Bulgare
  Sci. \textbf{36} (1983), no.~8, 1015--1017.

\bibitem[Kan89]{kanevdp2}
\bysame, \emph{Intermediate {J}acobians and {C}how groups of three-folds witha
  pencil of del {P}ezzo surfaces}, Ann. Mat. Pura Appl. (4) \textbf{154}
  (1989), 13--48.

\bibitem[Kap88]{kapranovquadric}
M.~M. Kapranov, \emph{On the derived categories of coherent sheaves on some
  homogeneous spaces}, Invent. Math. \textbf{92} (1988), 479--508.

\bibitem[Kat09]{katza-rat-hms-1}
L.~Katzarkov, \emph{Generalized homological mirror symmetry, superschemes and
  nonrationality}, Special metrics and supersymmetry. Lectures given in the
  workshop on geometry and physics: special metrics and supersymmetry Bilbao,
  Spain, 29--31 May 2008, AIP Conference Proceedings, vol. 1093, 2009,
  pp.~92--131.

\bibitem[Kat10]{katza-rat-hms-2}
\bysame, \emph{Generalized homological mirror symmetry and rationality
  questions}, Cohomological and geometric approaches to rationality problems,
  Progr. Math., vol. 282, Birkhäuser Boston, 2010, p.~219–243.

\bibitem[KMM10]{kuzne-manivel-marku}
A.~Kuznetsov, L.~Manivel, and D.~Markushevich, \emph{Abel--{J}acobi maps for
  hypersurfaces and noncommutative {C}alabi--{Y}au's.}, Commun. Contemp. Math.
  \textbf{12} (2010), no.~3, 373--416.

\bibitem[Kuz96]{kuznev22}
A.~Kuznetsov, \emph{An exceptional collection of vector bundles on $v_{22}$
  {F}ano threefolds}, Vestnik MGU, Ser 1, Mat. Mekh. (1996), no.~3, 41--44.

\bibitem[Kuz03]{kuznetv12}
\bysame, \emph{Derived category of ${V}_{12}$ {F}ano threefold}, math.
  AG/0310008, 2003.

\bibitem[Kuz04]{kuznetv14}
\bysame, \emph{Derived category of cubic and ${V}_{14}$ threefold}, Proc.
  V.A.Steklov Inst. Math. \textbf{246} (2004), 183--207.

\bibitem[Kuz05]{kuznetHyperplane}
\bysame, \emph{Hyperplane sections and derived categories}, Izvestiya RAN: Ser.
  Mat. \textbf{70:3} (2005), 23--128.

\bibitem[Kuz07a]{kuznetHPD}
\bysame, \emph{Homological projective duality}, Publ. Math., Inst. Hautes
  \'Etud. Sci. \textbf{105} (2007), 157--220.

\bibitem[Kuz07b]{kuznet-singul}
\bysame, \emph{Lefschetz decompositions and categorical resolutions of
  singularities}, Sel. Math., New Ser. \textbf{13} (2007), no.~4, 661--696.

\bibitem[Kuz08a]{kuznetfanothreefolds}
\bysame, \emph{Derived categories of {F}ano threefolds}, ArXiv:0809.0225, 2008.

\bibitem[Kuz08b]{kuznetconicbundles}
\bysame, \emph{Derived categories of quadric fibrations and intersections of
  quadrics}, Adv. Math. \textbf{218} (2008), no.~5, 1340--1369.

\bibitem[Kuz10]{kuznetcubicfourfold}
\bysame, \emph{Derived categories of cubic fourfolds}, Cohomological and
  geometric approaches to rationality problems, Progr. Math., vol. 282,
  Birkhäuser Boston, 2010, p.~219–243.

\bibitem[Let84]{letiziamoratti}
M.~Letizia, \emph{The {A}bel-{J}acobi mapping for the quartic threefold},
  Invent. Math. \textbf{75} (1984), no.~3, 477--492.

\bibitem[Log82]{logachevV10}
D.~Yu. Logach{\"e}v, \emph{Isogeny of the {A}bel-{J}acobi mapping for a {F}ano
  threefold of genus six}, Constructive algebraic geometry, Yaroslav. Gos. Ped.
  Inst., Yaroslavl\cprime, 1982, (also http://arxiv.org/abs/math/0407147),
  pp.~67--76, 143.

\bibitem[Muk81]{mukaiabelian}
S.~Mukai, \emph{Duality between ${D}({X})$ and ${D}(\hat{X})$ and its
  application to {P}icard sheaves}, Nagoya Math. J. \textbf{81} (1981),
  153--175.

\bibitem[Muk97]{mukaigranmisto}
\bysame, \emph{Noncommutativizability of {B}rill-{N}oether theory and
  {$3$}-dimensional {F}ano varieties}, S\=ugaku \textbf{49} (1997), no.~1,
  1--24.

\bibitem[Mur83]{murre-resultat}
J.P. Murre, \emph{Un r\'esultat en th\'eorie des cycles alg\'ebriques de
  codimension deux}, C. R. Acad. Sci., Paris, Sér. I \textbf{296} (1983),
  981--984.

\bibitem[NS09]{nagel-saito}
J.~Nagel and M.~Saito, \emph{Relative {C}how-{K}\"unneth decomposition for
  conic bundles and {P}rym varieties}, Int. Math. Res. Not. \textbf{2009}
  (2009), 2978--3001.

\bibitem[Orl91]{orlov-v5}
D.~O. Orlov, \emph{Exceptional set of vector bundles on the variety $v_5$},
  Vestnik Moskov. Univ. Ser. I Mat. Mekh. \textbf{5} (1991), 69--71.

\bibitem[Orl93]{orlovprojbund}
\bysame, \emph{Projective bundles, monoidal transformations and derived
  categories of coherent sheaves}, Russian Math. Izv. \textbf{41} (1993),
  133--141.

\bibitem[Orl97]{orlovequivk3}
\bysame, \emph{On equivalences of derived categories and {K}3 surfaces}, J.
  Math. Sci. \textbf{84} (1997), 1361--1381.

\bibitem[Orl03]{orlovequivall}
\bysame, \emph{Derived categories of coherent sheaves and equivalences between
  them}, Russian Math. Surveys \textbf{58} (2003), 511--591.

\bibitem[Orl05]{orlov-motiv}
\bysame, \emph{Derived categories of coherent sheaves and motives}, Russian
  Math. Surveys \textbf{60} (2005), 1242--1244.

\bibitem[Rei72]{reidphd}
M.~Reid, \emph{The complete intersection of two or more quadrics}, PhD Thesis,
  Cambridge, June 1972.

\bibitem[Sho84]{shokuprym}
V.V. Shokurov, \emph{Prym varieties: theory and applications}, Math. USSR-Izv.
  \textbf{23} (1984), 83--147.

\bibitem[Tih80]{tihoquarticsolid}
A.~S. Tihomirov, \emph{The intermediate {J}acobian of double {${\bf P}^{3}$}
  that is branched in a quartic}, Izv. Akad. Nauk SSSR Ser. Mat. \textbf{44}
  (1980), no.~6, 1329--1377, 1439.

\bibitem[Zub97]{zube-enriques}
S.~Zube, \emph{Exceptional vector bundles on {E}nriques surfaces}, Mathematical
  Notes \textbf{61} (1997), no.~6, 693--699.

\end{thebibliography}

\end{document}